\theoremstyle{theorem}
\newtheorem{theorem}{Theorem}[section]
\theoremstyle{corollary}
\newtheorem{corollary}{Corollary}[section]
\theoremstyle{lemma}
\theoremstyle{definition}
\theoremstyle{proof}
\theoremstyle{remark}
\newtheorem{remark}{Remark}[section]
\theoremstyle{example}
\theoremstyle{observation}
\begin{document}

\setcounter{Maxaffil}{2}
\title{Eigenvalue bounds for some classes of  matrices associated with graphs}
\author[a]{\rm Ranjit Mehatari\thanks{ranjitmehatari@gmail.com, mehatarir@nitrkl.ac.in}}
\author[b]{\rm M. Rajesh Kannan\thanks{rajeshkannan1.m@gmail.com, rajeshkannan@maths.iitkgp.ac.in}}
\affil[a]{Department of Mathematics,}
\affil[ ]{National Institute of Technology Rourkela,}
\affil[ ]{Rourkela - 769008, India}
\affil[ ]{ }
\affil[b]{Department of Mathematics,}
\affil[ ]{Indian Institute of Technology Kharagpur,}
\affil[ ]{Kharagpur-721302, India.}
\maketitle

\maketitle
\begin{abstract}

For a given complex square matrix $A$ with constant row sum, we establish two new eigenvalue inclusion sets. Using these bounds, first we derive bounds for the second largest and smallest eigenvalues of adjacency matrices of  $k$-regular graphs. Then, we establish some bounds for the second largest and the smallest eigenvalues of the normalized adjacency matrices of graphs and the second smallest eigenvalue and the largest eigenvalue of the Laplacian matrices of graphs. Sharpness of these bounds are verified by examples.

\end{abstract}
\textbf{AMS classification:} 05C50.\\
\textbf{Keywords:} Adjacency matrix, Laplacian matrix, Normalized adjacency matrix, Spectral radius, Algebraic connectivity, Randi\'c index.
\section{Introduction}

In this paper we consider simple, connected, finite and undirected graphs. Let $G=(V,E)$ be a graph with the vertex set $V=\{1,2,\ldots, n\}$ and the edge set $E$. If two vertices $i$ and $j$ of $G$ are adjacent, we denote it by $i\sim j$. For each vertex $i$, let $d_i$ denote the degree of the vertex $i$. A graph is said to be $d$-regular, if $d_i = d$ for all $i$.   The average degree of a graph $G$, denoted by $\Delta$, defined as $\frac{\sum_{i=1}^{n}d_i}{n}$. A vertex $i$ is said to be a dominating vertex if it is adjacent to all other vertices, i.e., $d_i=n-1$. We use $N(i,j)$ to denote the number of common neighbors of the vertices $i,j\in V, i\neq j$. For more of graph theoretic terminology, we refer to \cite{Bond}.

The  $(0,1)$-adjacency matrix  $A=[a_{ij}]$ of a  graph $G$, on $n$ vertices, is an $n \times n$ matrix defined by
$$a_{ij}=\begin{cases}
1,& if\ i\sim j,\\
0,& otherwise.
\end{cases}$$
Let $D$ denote the diagonal matrix whose $(i,i)^{th}$  entry is $d_i$. Then the matrix $L=D-A$ is called the Laplacian matrix of the graph $G$, and the matrix $\mathcal{A}=D^{-1}A$ is called the normalized adjacency matrix (or the transition matrix) of $G$ . For more details we refer to \cite{Brou, Chung, Cve1}. The matrix $\mathcal{A}$ is similar to the Randi\'c matrix \cite{Boz}. For any real number $\alpha$, the general Randi\'c index $R_\alpha(G)$\cite{Ran, Boll} of $G$ is defined by $$R_\alpha(G)=\sum_{\substack{i\sim j}}d_i^\alpha d_j^\alpha.$$
All the matrices $A,\ L$ and $ \mathcal{A}$ have real eigenvalues and reflect various interesting properties of the underlying graph $G$. Each of them comes with a set of strengths and weaknesses, which are discussed elaborately in \cite{But}.
 Various bounds for  the normalized adjacency matrix of a graph can be found in  \cite{Ban1,Chung,LiGu,Rojo}.   In \cite{Stan}, the author surveys eigenvalue bounds for different types of matrices associated with a graph viz., adjacency matrix, Laplacian matrix, signless Laplacian matrix, etc. In this paper, we provide some new bounds for the eigenvalues of  adjacency matrix, normalized adjacency matrix and Laplacian matrix including the algebraic connectivity(second smallest eigenvalue \cite{Fed}).

Let $A=[a_{ij}]$ be an $n \times n$ complex square matrix. The $i$-deleted absolute row sum of $A$ is defined by  $$r_i(A)=\sum_{j\in I\setminus\{i\}}|a_{ij}|,\ \forall i\in I,$$
where $I=\{1,2,\ldots,n\}$. Ger{\v s}gorin(1931) proved the following result. It is now well known as the Ger{\v s}gorin circle theorem.

\begin{theorem}
Let $A=[a_{ij}]$ be an $n\times n$ complex matrix. Then the eigenvalues of $A$ lie in the region
$$G_A=  \bigcup_{i=1}^n \Gamma_i(A),$$  where
$$ \Gamma_i(A)= \{z\in\mathbb{C}:|z-a_{ii}|\leq r_i(A)=\sum_{j\neq i}|a_{ij}|\}.$$
\end{theorem}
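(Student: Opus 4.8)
The plan is to use the standard maximal-component argument for the eigenvector. First I would fix an arbitrary eigenvalue $\lambda$ of $A$ together with a corresponding nonzero eigenvector $x=(x_1,x_2,\ldots,x_n)^T$, so that $Ax=\lambda x$. Since $x\neq 0$, I can choose an index $i\in I$ at which $|x_i|=\max_{1\le k\le n}|x_k|>0$; the key point is that the division performed later is legitimate precisely because this particular $|x_i|$ is nonzero.

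Next I would read off the $i$-th coordinate of the equation $Ax=\lambda x$, namely $\sum_{j=1}^{n}a_{ij}x_j=\lambda x_i$, and separate the diagonal term to get $(\lambda-a_{ii})x_i=\sum_{j\neq i}a_{ij}x_j$. Taking absolute values and applying the triangle inequality gives $|\lambda-a_{ii}|\,|x_i|\le\sum_{j\neq i}|a_{ij}|\,|x_j|$.

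Finally, I would use maximality, $|x_j|\le|x_i|$ for every $j$, to bound the right-hand side by $\left(\sum_{j\neq i}|a_{ij}|\right)|x_i|=r_i(A)\,|x_i|$. Dividing through by $|x_i|>0$ yields $|\lambda-a_{ii}|\le r_i(A)$, that is, $\lambda\in\Gamma_i(A)\subseteq G_A$. Since $\lambda$ was an arbitrary eigenvalue of $A$, every eigenvalue lies in $G_A$, which is the claim.

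There is no genuinely hard step here; the only thing one must be careful about is selecting $i$ to be the index of a component of $x$ of largest modulus, rather than an arbitrary index, since both the inequality $|x_j|\le|x_i|$ and the final division by $|x_i|$ depend on that choice.
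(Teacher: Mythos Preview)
Your argument is correct and is precisely the standard textbook proof of Ger\v{s}gorin's circle theorem. Note, however, that the paper does not actually supply its own proof of this statement: it is quoted as a classical result (attributed to Ger\v{s}gorin, 1931) and used as background for the later localization theorems. So there is no ``paper's own proof'' to compare against; your write-up would serve perfectly well as a self-contained justification if one were desired.
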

The set $\Gamma_i(A)$ is called the  $i^{th}$ Ger{\v s}gorin disk of $A$, and $G_A$ is called the Ger{\v s}gorin region of $A$.

In the same vein, Ostrowski (1937) and Brauer (1947) proved the following result.
\begin{theorem}
    \label{Bau}
        Let $A=[a_{ij}]$ be an $n\times n$ complex matrix. Then the eigenvalues of $A$ lie in the region
    $$K_A=\bigcup_{\substack{i,j\in I\\ i\neq j}}K_{ij}(A),$$
    where for $i\neq j$, $K_{ij}(A)$ is defined by
    $$K_{ij}(A)=\{z\in\mathbb{C}:|z-a_{ii}||z-a_{jj}|\leq r_i(A)r_j(A)\}.$$
\end{theorem}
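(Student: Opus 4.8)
The plan is to adapt the classical proof of the Ger\v{s}gorin circle theorem, but to track the \emph{two} coordinates of largest modulus of an eigenvector rather than just one. Throughout I assume $n\ge 2$, since otherwise the index set of the union defining $K_A$ is empty and there is nothing to prove. Let $\lambda$ be an eigenvalue of $A$ with eigenvector $x=(x_1,\dots,x_n)^{T}\ne 0$. Choose indices $p,q\in I$ with $p\ne q$ such that $|x_p|$ is the largest and $|x_q|$ the second largest among $|x_1|,\dots,|x_n|$; in particular $|x_p|\ge|x_q|$, and $|x_j|\le|x_q|$ for every $j\ne p$.

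First I would dispose of the degenerate case $|x_q|=0$. Then $x_p$ is the only nonzero coordinate of $x$, and the $p$-th component of $Ax=\lambda x$ reads $a_{pp}x_p=\lambda x_p$, so $\lambda=a_{pp}$. Hence $|\lambda-a_{pp}|\,|\lambda-a_{qq}|=0\le r_p(A)r_q(A)$, and therefore $\lambda\in K_{pq}(A)\subseteq K_A$.

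In the principal case $|x_q|>0$ (so both $|x_p|$ and $|x_q|$ are positive), I would write the $p$-th and $q$-th scalar equations of $Ax=\lambda x$ in the form $(\lambda-a_{pp})x_p=\sum_{j\ne p}a_{pj}x_j$ and $(\lambda-a_{qq})x_q=\sum_{j\ne q}a_{qj}x_j$. Taking moduli and bounding $|x_j|\le|x_q|$ for $j\ne p$ in the first identity and $|x_j|\le|x_p|$ for $j\ne q$ in the second yields
$$|\lambda-a_{pp}|\,|x_p|\le r_p(A)\,|x_q|,\qquad |\lambda-a_{qq}|\,|x_q|\le r_q(A)\,|x_p|.$$
Multiplying these two inequalities and cancelling the positive factor $|x_p|\,|x_q|$ gives $|\lambda-a_{pp}|\,|\lambda-a_{qq}|\le r_p(A)r_q(A)$, that is, $\lambda\in K_{pq}(A)\subseteq K_A$, as required.

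The one point that needs care is the choice of which coordinate bounds which sum: in the first inequality every off-diagonal term must be estimated by the \emph{second} largest modulus $|x_q|$ — which is valid precisely because every index other than $p$ has modulus at most $|x_q|$ — while in the second inequality we use the global maximum $|x_p|$. Interchanging these would only recover the weaker Ger\v{s}gorin estimate. Isolating the one-nonzero-coordinate eigenvector case at the outset is exactly what makes the final cancellation of $|x_p|\,|x_q|$ legitimate; beyond this the argument is entirely routine.
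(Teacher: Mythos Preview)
Your proof is correct and is precisely the classical argument for the Brauer--Cassini ovals theorem: select the two coordinates of largest modulus in an eigenvector, extract the corresponding two scalar equations from $Ax=\lambda x$, bound the off-diagonal sums, multiply, and cancel. The handling of the degenerate case $|x_q|=0$ is careful and correct, and your remark about which coordinate bound goes with which row is exactly the point that distinguishes this from a mere repetition of Ger\v{s}gorin.

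As for comparison with the paper: the paper does \emph{not} prove this statement. Theorem~\ref{Bau} is quoted in the introduction as a classical result attributed to Ostrowski (1937) and Brauer (1947), with a reference to Varga's monograph, and is then used as a tool in Theorems~\ref{baur1main}, \ref{adj_th3}, and \ref{nadj_th4}. So there is no ``paper's own proof'' to compare against; your write-up simply supplies the standard proof that the paper takes for granted.
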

The set $K_{ij}(A)$ is called the $(i, j)^{th}$-Brauer-Cassini oval for the matrix A and $K_A$ is called the Brauer region. For more details about the localization of eigenvalues, we refer to \cite{Varga}.
In \cite{Ban1},  the following  property of irreducible row-stochastic matrices established:
\begin{theorem}
    \label{sub1}
    Let $A=[a_{ij}]$ be an irreducible row-stochastic square matrix of order n. Then  all eigenvalues of $A$ other than 1 are eigenvalues of
    $$A(k)=A(k|k)-j_{n-1}a(k),\ \ k=1,2,\ldots, n,$$
    where $A(k|k)$ is the $k^{th}$ principal submatrix, $a(k)^T=\left[\begin{array}{cccccc}
    a_{k1}&\cdots&a_{k,k-1}&a_{k,k+1}&\cdots&a_{kn}
    \end{array}\right]$ is the $k$-deleted row of $A$ and $j_{n-1}$ is the $n-1$ component row vector with all entries equal to 1.
\end{theorem}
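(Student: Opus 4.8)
The plan is to produce an explicit similarity of $A$ that splits off the eigenvalue $1$ and leaves precisely the matrix $A(k)$ behind. Since conjugating $A$ by a permutation matrix changes neither its spectrum nor, up to relabeling, the shape of the construction, it suffices to prove the statement for $k=n$; the case of general $k$ then follows by first interchanging the $k$-th and $n$-th coordinates. So I would write $A$ in $2\times 2$ block form with $(n-1)\times(n-1)$ leading block $B=A(n|n)$, last row $\bigl(a(n)^T\ \ a_{nn}\bigr)$, and last column whose top part is the $n$-deleted $n$-th column $c$ and whose bottom entry is $a_{nn}$. Writing $e$ for the all-ones column vector of length $n-1$ (so $e^T=j_{n-1}$) and $\mathbf 1$ for the all-ones column of length $n$, the row-stochasticity hypothesis $A\mathbf 1=\mathbf 1$ becomes, in block form, the two identities $Be+c=e$ and $a(n)^Te+a_{nn}=1$.

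Next I would conjugate by $P=\begin{pmatrix} I_{n-1}& e\\ 0&1\end{pmatrix}$, whose inverse is $\begin{pmatrix} I_{n-1}& -e\\ 0&1\end{pmatrix}$; this is the change of basis sending the last standard basis vector to $\mathbf 1$ and fixing the others. A direct block multiplication, using the two row-sum identities to collapse $Be+c$ to $e$ and $a(n)^Te+a_{nn}$ to $1$, shows that $P^{-1}AP$ is block lower triangular with diagonal blocks $B-e\,a(n)^T=A(n|n)-j_{n-1}\,a(n)=A(n)$ and the $1\times1$ block $[\,1\,]$, the top-right block being $e-e=0$. Hence the characteristic polynomial of $A$ factors as $(\lambda-1)\,\det\!\bigl(\lambda I_{n-1}-A(n)\bigr)$, so every eigenvalue of $A$ is either $1$ or an eigenvalue of $A(n)$; in particular all eigenvalues of $A$ other than $1$ are eigenvalues of $A(n)$. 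Since $A$ is nonnegative and irreducible with spectral radius $1$, the Perron--Frobenius theorem gives that $1$ is a simple eigenvalue of $A$, so $A(n)$ in fact carries exactly the $n-1$ eigenvalues of $A$ different from $1$.

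The computation is routine; the one place that needs care is the bookkeeping in the block product $P^{-1}AP$, namely verifying that the correction term collapses (via $Be+c=e$) to the clean diagonal block $A(n|n)-j_{n-1}a(n)$ and that the top-right block really vanishes. I would also mention an eigenvector variant that sidesteps the choice of $P$: given $Ax=\lambda x$ with $\lambda\neq1$ and $x=(y^T\ x_n)^T$, setting $z=y-x_ne$ and using the two block equations together with row-stochasticity yields $A(n)z=\lambda z$, while $z\neq0$ because $z=0$ would force $x$ to be a multiple of $\mathbf 1$ and hence $\lambda=1$; this shows directly that every eigenvalue of $A$ other than $1$ lies in the spectrum of $A(n)$.
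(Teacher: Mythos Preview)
Your proof is correct. The paper does not prove this particular theorem---it is quoted from \cite{Ban1}---but it proves the generalization Theorem~\ref{sub2} for complex matrices with constant row sum $\gamma$, and your argument matches that proof essentially verbatim: both produce an explicit similarity that peels off the constant row sum eigenvalue and leaves $A(k)$ as the complementary diagonal block. The only cosmetic difference is that the paper permutes the distinguished index to position $1$ and conjugates by $P=[e,e_2,\dots,e_n]$ to obtain an \emph{upper} block-triangular form $\begin{pmatrix}\gamma & a(k)^T\\ 0 & A(k)\end{pmatrix}$, whereas you permute to position $n$ and conjugate by $\begin{pmatrix}I_{n-1}&e\\0&1\end{pmatrix}$ to obtain the \emph{lower} block-triangular form $\begin{pmatrix}A(n)&0\\ a(n)^T&1\end{pmatrix}$; the underlying linear algebra is identical. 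Your additional eigenvector argument and the appeal to Perron--Frobenius for simplicity of the eigenvalue $1$ are extras not present in the paper's treatment, but they are correct and harmless.
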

 In \cite{Hall}, the authors obtained results similar to the above theorem for the real matrices. In this paper we extend this result for any complex square matrix $A$ with constant row sum. If $A$ is a  complex square matrix with the constant row sum $\gamma$, then $\gamma$ is always an eigenvalue of $A$. We construct $n$ number of block upper diagonal complex square matrices each of them is similar to $A$. Then, we apply Ger{\v s}gorin and Brauer theorems to these $n$ matrices to get some new eigenvalue inclusion regions.

Let $A$ be an $n \times n$ complex matrix with real eigenvalues and the eigenvalues are ordered as follows: $\lambda_1 \geq \lambda_2 \geq \dots \geq \lambda_n.$
Next, we  recall the eigenvalue inequalities for this class of matrices.
\begin{theorem}\cite{Wolk}
\label{tr1}
Let B be an $n\times n$ complex matrix with real eigenvalues
and let $$m=\frac{\text{trace } B}{n}\ \ \text{ and }\ \ s^2=\frac{\text{trace } B^2}{n}-m^2, $$
then
\begin{equation}
\label{tr_eqn1}
m-s(n-1)^\frac{1}{2}\leq\lambda_n\leq m-s/(n-1)^\frac{1}{2},
\end{equation}
\begin{equation}
\label{tr_eqn2}
m+s/(n-1)^\frac{1}{2}\leq\lambda_1\leq m+s(n-1)^\frac{1}{2}.
\end{equation}
Equality holds on the left (right) of (\ref{tr_eqn1}) if and only if equality holds on the
left (right) of (\ref{tr_eqn2}) if and only if the $n-1$ largest (smallest) eigenvalues are
equal.
\end{theorem}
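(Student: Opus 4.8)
The plan is to translate the statement into elementary inequalities among the shifted eigenvalues $\mu_i:=\lambda_i-m$. Since $B$ has eigenvalues $\lambda_1,\dots,\lambda_n$ and $B^2$ has eigenvalues $\lambda_1^2,\dots,\lambda_n^2$ (Schur triangularization), the definitions of $m$ and $s$ give
\[
\sum_{i=1}^n\mu_i=\operatorname{trace}B-nm=0,\qquad \sum_{i=1}^n\mu_i^2=\operatorname{trace}B^2-nm^2=ns^2,
\]
so in particular $s^2\ge0$ and $\mu_1=\max_i\mu_i\ge0\ge\min_i\mu_i=\mu_n$. Everything will be deduced from these two identities together with the ordering $\mu_n\le\mu_i\le\mu_1$.

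For the two outer inequalities I would use Cauchy--Schwarz. From $\mu_1=-\sum_{i=2}^n\mu_i$ we get $\mu_1^2\le(n-1)\sum_{i=2}^n\mu_i^2=(n-1)(ns^2-\mu_1^2)$, hence $\mu_1^2\le(n-1)s^2$, i.e. $\lambda_1\le m+s\sqrt{n-1}$. Running the same argument with $-\mu_n=\sum_{i=1}^{n-1}\mu_i$ (equivalently, replacing $B$ by $-B$, for which $m\mapsto-m$ and $s\mapsto s$) yields $\lambda_n\ge m-s\sqrt{n-1}$.

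The two inner inequalities are the delicate part. The starting point is the pointwise bound $(\mu_1-\mu_i)(\mu_i-\mu_n)\ge0$, valid for every $i$ because $\mu_n\le\mu_i\le\mu_1$. Summing over $i$ and using the two trace identities to cancel the cross terms collapses the sum to
\[
s^2\le(\lambda_1-m)(m-\lambda_n).
\]
Next, since $\sum_i\mu_i=0$ and each $\mu_i\le\mu_1$, one has $m-\lambda_n=\sum_{i=1}^{n-1}\mu_i\le(n-1)\mu_1=(n-1)(\lambda_1-m)$, and symmetrically $\lambda_1-m\le(n-1)(m-\lambda_n)$. Substituting the first of these into the displayed inequality gives $s^2\le(n-1)(\lambda_1-m)^2$, i.e. $\lambda_1\ge m+s/\sqrt{n-1}$; substituting the second gives $\lambda_n\le m-s/\sqrt{n-1}$.

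Finally the equality discussion is bookkeeping along the chain of estimates. Equality in the Cauchy--Schwarz step for $\mu_1$ holds exactly when $\mu_2=\cdots=\mu_n$, i.e. when the $n-1$ smallest eigenvalues coincide, and its $-B$ counterpart is tight exactly when the $n-1$ largest eigenvalues coincide. For the inner bound $\lambda_1\ge m+s/\sqrt{n-1}$, equality forces $m-\lambda_n=(n-1)(\lambda_1-m)$ to be tight, which (since each $\mu_i\le\mu_1$) means $\mu_1=\cdots=\mu_{n-1}$, i.e. the $n-1$ largest eigenvalues are equal; symmetrically, equality in $\lambda_n\le m-s/\sqrt{n-1}$ is equivalent to the $n-1$ smallest eigenvalues being equal. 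A direct check shows each of these extremal configurations does attain the corresponding bound. Hence ``the $n-1$ largest eigenvalues are equal'' simultaneously characterizes the left equalities in (\ref{tr_eqn1}) and (\ref{tr_eqn2}), while ``the $n-1$ smallest eigenvalues are equal'' characterizes the two right equalities, as claimed. The step I expect to be the main obstacle is isolating the pointwise inequality $(\mu_1-\mu_i)(\mu_i-\mu_n)\ge0$ whose sum telescopes cleanly against the trace identities, and then finding the comparison $m-\lambda_n\le(n-1)(\lambda_1-m)$ that sharpens the crude product bound $s^2\le(\lambda_1-m)(m-\lambda_n)$ into the stated $1/\sqrt{n-1}$ form while keeping the equality cases visible.
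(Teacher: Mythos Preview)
The paper does not supply a proof of this theorem: it is quoted from \cite{Wolk} as background and used as a black box throughout Sections~\ref{Sec3}--\ref{Sec5}. So there is no ``paper's own proof'' to compare against.

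That said, your argument is correct and is essentially the classical Wolkowicz--Styan derivation. The reduction to the centered eigenvalues $\mu_i=\lambda_i-m$ with $\sum\mu_i=0$ and $\sum\mu_i^2=ns^2$ is the right normalization; the outer bounds follow from Cauchy--Schwarz exactly as you wrote; and the inner bounds are obtained in the original paper precisely by summing $(\mu_1-\mu_i)(\mu_i-\mu_n)\ge0$ to get $s^2\le(\lambda_1-m)(m-\lambda_n)$ and then comparing $m-\lambda_n$ with $(n-1)(\lambda_1-m)$ (and symmetrically). One small point worth making explicit in your equality discussion: tightness in $\lambda_1=m+s/\sqrt{n-1}$ forces \emph{both} steps of the chain $s^2\le(\lambda_1-m)(m-\lambda_n)\le(n-1)(\lambda_1-m)^2$ to be equalities, not just the second; however, once $\mu_1=\cdots=\mu_{n-1}$ is deduced from the second, the first equality follows automatically, so your conclusion stands.
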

Using some of the  extensions of Theorem \ref{sub1} and  Theorem \ref{tr1},  we obtain bounds for the eigenvalues of adjacency matrix(of $d$-regular graph), normalized adjacency matrix and Laplacian matrix. Tightness of these bounds are illustrated with examples. \\

This article is organized as follows: In Section \ref{Sec2}, we establish some eigenvalue localizing theorems for a complex square matrix with constant row sum. In Section \ref{Sec3}, we derive bounds for the second largest eigenvalue and the least eigenvalue of the adjacency matrix of a regular graph. In Section \ref{Sec4}, we establish some bounds for the second largest eigenvalue and the least eigenvalue of the normalized adjacency matrix of any connected graph. Finally, we provide bounds for the second smallest eigenvalue(algebraic connectivity) and largest eigenvalue of the Laplacian matrix of any connected graph. This is done in Section \ref{Sec5}.

\section{Some eigenvalue inclusion sets for complex matrices with constant row sum}\label{Sec2}
For an $n\times n$ complex matrix $A$, let $A(k|k)$ denote the $k^{th}$ principal submatrix of $A$ obtained by deleting the $k^{th}$ row and the $k^{th}$ column of $A$. Let  $j_{n}$ denote row vector of size $n$, with all entries equal to $1$ and $e$ denote the column vector of appropriate size with all entries are $1$.
\begin{theorem}
    \label{sub2}
    Let $A=[a_{ij}]$ be an $n\times n$ complex  matrix with the constant row sum $\gamma$. Then $\gamma$ is an eigenvalue of $A$. Furthermore, $A$ is similar to the complex block upper triangular matrix $$\left[\begin{array}{cc}
        \gamma&a(k)^T\\
        0&A(k)\end{array}\right],$$
        where $A(k)=A(k|k)-j_{n-1}a(k),\ \ k=1,2,\ldots, n,$ and
 $a(k)^T=\left[\begin{array}{cccccc}
    a_{k1}&\cdots&a_{k,k-1}&a_{k,k+1}&\cdots&a_{kn}
    \end{array}\right]$ is the $k$-deleted row of $A$.
\end{theorem}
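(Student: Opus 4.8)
The plan is to exhibit, for each $k$, an explicit basis of $\mathbb{C}^n$ in which $A$ is represented by the claimed block triangular matrix; the corresponding change-of-basis matrix is then the desired similarity.

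First, the constant row sum hypothesis says exactly that $Ae=\gamma e$, so $\gamma$ is an eigenvalue of $A$ with eigenvector $e$. Now fix $k\in\{1,\dots,n\}$ and consider the ordered tuple $\mathcal B_k=(e,\,e_1,\dots,e_{k-1},\,e_{k+1},\dots,e_n)$, where $e_1,\dots,e_n$ is the standard basis of $\mathbb C^n$. This tuple is a basis: it has $n$ entries, and if $\alpha e+\sum_{i\neq k}\beta_i e_i=0$, then comparing $k$-th coordinates gives $\alpha=0$ and hence all $\beta_i=0$. Let $S_k$ be the $n\times n$ matrix whose columns are the vectors of $\mathcal B_k$ in that order. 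Then $S_k$ is invertible, and $S_k^{-1}AS_k$ is the matrix of $A$ with respect to $\mathcal B_k$.

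It remains to compute how $A$ acts on the elements of $\mathcal B_k$ and to read off the coordinates. On the first basis vector, $Ae=\gamma e$, which produces the first column $(\gamma,0,\dots,0)^T$. For $j\neq k$, the vector $Ae_j$ is the $j$-th column of $A$, i.e. $\sum_{i=1}^n a_{ij}e_i$; substituting the relation $e_k=e-\sum_{i\neq k}e_i$ gives
$$Ae_j=a_{kj}\,e+\sum_{i\neq k}(a_{ij}-a_{kj})\,e_i.$$
Thus the $\mathcal B_k$-coordinate column of $Ae_j$ has entry $a_{kj}$ in the $e$-slot and entry $a_{ij}-a_{kj}$ in the $e_i$-slot for each $i\neq k$. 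Collecting these columns over $j\neq k$, the first row of $S_k^{-1}AS_k$ is $\big(a_{kj}\big)_{j\neq k}=a(k)^T$, and its lower-right $(n-1)\times(n-1)$ block has $(i,j)$-entry $a_{ij}-a_{kj}$; the latter is precisely $A(k|k)$ minus the rank-one matrix whose every row equals $a(k)^T$, namely $A(k|k)-j_{n-1}a(k)=A(k)$. Hence $S_k^{-1}AS_k=\left[\begin{smallmatrix}\gamma & a(k)^T\\ 0 & A(k)\end{smallmatrix}\right]$, which is the assertion.

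I do not foresee a genuine obstacle; the substance of the argument is the choice of the basis $\mathcal B_k$. The points that require care are: checking that $\mathcal B_k$ is a basis (equivalently, that $S_k$ is invertible), carrying out the re-expansion of the $j$-th column of $A$ in $\mathcal B_k$ via $e_k=e-\sum_{i\neq k}e_i$, and matching the resulting lower-right block with the stated expression $A(k)=A(k|k)-j_{n-1}a(k)$, which requires reading $j_{n-1}a(k)$ as the $(n-1)\times(n-1)$ matrix all of whose rows equal $a(k)^T$.
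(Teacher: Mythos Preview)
Your proof is correct and follows essentially the same idea as the paper's: exhibit an explicit similarity whose first column is the all-ones vector $e$ and whose remaining columns are the standard vectors $e_i$ for $i\neq k$. The only cosmetic difference is that the paper factors the similarity as a permutation $P_k$ (moving index $k$ to the first position) followed by the matrix $P=[e,e_2,\dots,e_n]$, whereas you combine these into a single change of basis $S_k=[e,e_1,\dots,e_{k-1},e_{k+1},\dots,e_n]$ and read off the block form directly from the action of $A$ on $\mathcal B_k$.
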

\begin{proof}
Let $e_i$ denote the column vector with  $1$ at the $i^{th}$ position and $0$ elsewhere.
Let $P_1=I_n$ and for $k>1$, consider the permutation matrix $P_k=\left[\begin{array}{cccccccc}
e_2&e_3&\cdots&e_{k}&e_1&e_{k+1}&\cdots&e_n
\end{array}\right]$. \\
Therefore, the matrix $A$ is similar to the matrix
\begin{eqnarray*}
    A_k=P_k^{-1}AP_k=\left[\begin{array}{cc}
        a_{kk}&a(k)^T\\
        y&A(k|k)\end{array}\right],\ \forall k=1,2,\ldots,n,
\end{eqnarray*}
where $y=\left[\begin{array}{ccccccc}
a_{1k}&a_{2k}&\cdots&a_{k-1,k}&a_{k+1,k}&\cdots&a_{nk}
\end{array}\right]^T.$\\
Let $P=\left[\begin{array}{cccccccc}
e&e_2&e_3&\cdots&e_{k}&e_{k+1}&\cdots&e_n
\end{array}\right]$.
Then $P$ is non singular with
$$P^{-1}=\left[\begin{array}{cccccccc}
e'&e_2&e_3&\cdots&e_{k}&e_{k+1}&\cdots&e_n
\end{array}\right],$$
where $e'$ is the column vector with the first entry equals to 1 and all other entries are $-1$.
Now,
\begin{eqnarray*}
    P^{-1}A_kP&=&P^{-1}\left[\begin{array}{cc}
        \gamma&a(k)^T\\
        \gamma j_{n-1}&A(k|k)\end{array}\right]\\
    &=&\left[\begin{array}{cc}
        \gamma&a(k)^T\\
        0&A(k|k)-j_{n-1}a(k)\end{array}\right],
\end{eqnarray*}
for $k=1,2\ldots,n.$ This shows that the matrices $A$ and $ \left[\begin{array}{cc}
        \gamma&a(k)^T\\
        0&A(k|k)-j_{n-1}a(k)\end{array}\right]$ are similar.
\end{proof}

Using  Theorem \ref{sub2} and the Ger{\v s}gorin circle theorem, next we establish a localization theorem for eigenvalues of a complex matrix.
\begin{theorem}
        \label{gers}
        Let $A=[a_{ij}]$ be an $n \times n$ complex  matrix  with the constant row sum $\gamma$. Then the eigenvalues of $A$ lie in the region
        $$\bigcap_{i=1}^n \Big{[}G_{A(i)}\cup\{\gamma\}\Big{]},$$
        where
        $G_{A(i)}=\bigcup_{k\neq i}G_{A(i)}(k)$ , with $G_{A(i)}(k)=\{z\in\mathbb{C}:|z-a_{kk}+a_{ik}|\leq\sum_{j\neq k}|a_{kj}-a_{ij}|\}.$
    \end{theorem}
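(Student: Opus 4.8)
The plan is to derive Theorem~\ref{gers} as a direct consequence of Theorem~\ref{sub2} together with the Ger{\v s}gorin circle theorem. First I would fix an index $i\in\{1,2,\ldots,n\}$ and invoke Theorem~\ref{sub2} to conclude that $A$ is similar to the block upper triangular matrix
$$\left[\begin{array}{cc}\gamma & a(i)^T\\ 0 & A(i)\end{array}\right],\qquad A(i)=A(i\mid i)-j_{n-1}a(i).$$
Since the characteristic polynomial of a block triangular matrix factors over its diagonal blocks, the characteristic polynomial of $A$ equals $(\lambda-\gamma)\,\det(\lambda I_{n-1}-A(i))$. Hence every eigenvalue $\lambda$ of $A$ is either equal to $\gamma$ or is an eigenvalue of the $(n-1)\times(n-1)$ matrix $A(i)$.

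Next I would apply the Ger{\v s}gorin circle theorem to $A(i)$, being careful that its rows and columns are indexed by $I\setminus\{i\}$. From $A(i)=A(i\mid i)-j_{n-1}a(i)$ one reads off that the $(k,k)$-entry of $A(i)$ is $a_{kk}-a_{ik}$ and that the $(k,\ell)$-entry is $a_{k\ell}-a_{i\ell}$ for $\ell\in I\setminus\{i,k\}$. Therefore the $k$-deleted absolute row sum of $A(i)$ is $\sum_{\ell\in I\setminus\{i,k\}}|a_{k\ell}-a_{i\ell}|\le \sum_{j\neq k}|a_{kj}-a_{ij}|$, so the $k^{th}$ Ger{\v s}gorin disk of $A(i)$ is contained in $G_{A(i)}(k)=\{z\in\mathbb{C}:|z-a_{kk}+a_{ik}|\le\sum_{j\neq k}|a_{kj}-a_{ij}|\}$. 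Consequently every eigenvalue of $A(i)$ lies in $G_{A(i)}=\bigcup_{k\neq i}G_{A(i)}(k)$, and combining this with the first paragraph, every eigenvalue of $A$ lies in $G_{A(i)}\cup\{\gamma\}$.

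Finally, since $i$ was arbitrary, this inclusion holds for each $i=1,2,\ldots,n$ simultaneously, so the eigenvalues of $A$ lie in $\bigcap_{i=1}^{n}\big[G_{A(i)}\cup\{\gamma\}\big]$, which is the assertion. I do not expect any genuine analytic obstacle here; the substance is already contained in Theorem~\ref{sub2}. The only points demanding care are the index bookkeeping for $A(i)$ (so that the diagonal entries and deleted row sums of the rank-one perturbation $A(i\mid i)-j_{n-1}a(i)$ are computed correctly) and the observation that the localization set must be an \emph{intersection} over $i$ rather than a union, precisely because the statement ``$\lambda\in G_{A(i)}\cup\{\gamma\}$'' is valid for \emph{every} admissible $i$.
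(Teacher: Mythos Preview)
Your proposal is correct and follows essentially the same route as the paper: invoke Theorem~\ref{sub2} to pass to the block upper triangular form, apply Ger{\v s}gorin to the $(n-1)\times(n-1)$ block $A(i)$, and intersect over $i$. Your observation that the actual $k$-deleted row sum of $A(i)$ is $\sum_{\ell\in I\setminus\{i,k\}}|a_{k\ell}-a_{i\ell}|\le\sum_{j\neq k}|a_{kj}-a_{ij}|$ (so that the Ger{\v s}gorin disk is only \emph{contained in} $G_{A(i)}(k)$) is in fact a slightly more careful treatment of the index $j=i$ than the paper gives.
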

\begin{proof}
By Ger{\v s}gorin circle theorem, each eigenvalue of $A(i),\ i=1,2,\ldots,n$ lies in the region
$G_A(i)$. Now, using Theorem \ref{sub2}, we get
\begin{eqnarray*}
G_{A(i)}&=&\bigcup_{\substack{k=1\\k\neq i}}^n\{z\in\mathbb{C}:|z-A(i)_{kk}|\leq\sum_{j\neq k}|A(i)_{kj}|\},\\
&=&\bigcup_{\substack{k=1\\k\neq i}}^n\{z\in\mathbb{C}|z-a_{kk}+a_{ik}|\leq\sum_{j\neq k}|a_{kj}-a_{ij}|\}.
\end{eqnarray*}
Since $A$ is similar to the matrix
$\left[\begin{array}{cc}
\gamma&a(i)^T\\
0&A(i)\end{array}\right],\ i=1,2,\ldots,n,$ all eigenvalues of $A$ lie in the region $G_A(i)\cup\{\gamma\}$, for all $i=1,2,\ldots,n.$ Hence
the eigenvalues of $A$ lie in the region
$$\bigcap_{i=1}^n \Big{[}G_{A(i)}\cup\{\gamma\}\Big{]}.$$
\end{proof}

\begin{remark}
For the following example the eigenvalue region obtained in the above theorem is smaller than the eigenvalue region obtained by Ger{\v s}gorin circle theorem. Consider the matrix,
$$A=\left[\begin{array}{ccc}
1&1+i&i\\
i&2+i&0\\
2&i&i
\end{array}\right]$$
By Ger{\v s}gorin circle theorem, all the eigenvalues of $A$ lie in the union of three Ger{\v s}gorin discs, say, $\sigma_1(A)$, $\sigma_2(A)$ and $\sigma_3(A)$, i.e.,
$$G_A=\sigma_1(A)\cup\sigma_2(A)\cup\sigma_3(A),$$
where
$$\sigma_1(A)=\{z\in\mathbb{C}:|z-1|\leq1+\sqrt{2}\},$$
$$\sigma_2(A)=\{z\in\mathbb{C}:|z-2-i|\leq1\},$$
and
$$\sigma_3(A)=\{z\in\mathbb{C}:|z-i|\leq3\}.$$
Now using Theorem \ref{sub2}, we have
$$A(1)=\left[\begin{array}{cc}
1&-i\\
-1&0\end{array}\right]$$
Let $G_{A(1)}=\sigma_1(A(1))\cup\sigma_2(A(1))$, where
$$\sigma_1(A(1))=\{z\in\mathbb{C}:|z-1|\leq1\},$$
and
$$\sigma_2(A(1))=\{z\in\mathbb{C}:|z|\leq1\}.$$
It is clear that,  $G_{A(1)}$ is a proper subset of $G_A$.
Hence $\bigcap_{i=1}^3 \Big{[}G_{A(i)}\cup\{2+2i\}\Big{]}$ is properly contained in $G_A$.
\end{remark}

In the following theorem, using Theorem \ref{sub2} and Theorem \ref{Bau}, we establish   a localization theorem for eigenvalues of a complex matrix.
\begin{theorem}
    \label{baur1main}
    Let $A=[a_{ij}]$ be an  $n \times n$ complex  matrix with the constant row sum $\gamma$. Then the eigenvalues of $A$ lie in the region
    $$\bigcap_{i=1}^n \Big{[}K_{A(i)}\cup\{\gamma\}\Big{]},$$
    where $K_{A(i)}$ is given by
    $$\bigcup_{\substack{j,k\neq i\\j\neq k}}\Big\{z\in\mathbb{C}:|z-a_{jj}+a_{ij}||z-a_{kk}+a_{ik}|\leq \Big(\sum_{l\neq j}|a_{jl}-a_{il}|\Big)\Big(\sum_{m\neq k}|a_{km}-a_{im}|\Big)\Big\}.$$
\end{theorem}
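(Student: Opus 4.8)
The plan is to follow the template of the proof of Theorem~\ref{gers}, with the Ger{\v s}gorin circle theorem replaced by Theorem~\ref{Bau} (the Ostrowski--Brauer Cassini-oval theorem). First I would invoke Theorem~\ref{sub2}: for each fixed $i\in\{1,2,\dots,n\}$ the matrix $A$ is similar to the block upper triangular matrix whose diagonal blocks are the $1\times1$ block $\gamma$ and the $(n-1)\times(n-1)$ block $A(i)=A(i|i)-j_{n-1}a(i)$. Hence the eigenvalues of $A$ are precisely $\gamma$ together with the eigenvalues of $A(i)$, so it suffices to localize the spectrum of each $A(i)$, adjoin $\gamma$, and then intersect the resulting regions over $i=1,2,\dots,n$.

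Next I would read off the entries of $A(i)$. Indexing the rows and columns of $A(i)$ by $I\setminus\{i\}$, the $(j,k)$ entry of $A(i|i)-j_{n-1}a(i)$ is $a_{jk}-a_{ik}$; in particular the $j$th diagonal entry of $A(i)$ is $a_{jj}-a_{ij}$, and its $j$-deleted absolute row sum is $\sum_{l\ne j}|a_{jl}-a_{il}|$. Applying Theorem~\ref{Bau} to $A(i)$ then shows that every eigenvalue $z$ of $A(i)$ satisfies
$$|z-a_{jj}+a_{ij}|\,|z-a_{kk}+a_{ik}|\;\le\;\Big(\sum_{l\ne j}|a_{jl}-a_{il}|\Big)\Big(\sum_{m\ne k}|a_{km}-a_{im}|\Big)$$
for some pair $j,k\in I\setminus\{i\}$ with $j\ne k$; equivalently, every eigenvalue of $A(i)$ lies in $K_{A(i)}$.

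Combining the two steps, for every $i$ the eigenvalues of $A$ lie in $K_{A(i)}\cup\{\gamma\}$, hence in $\bigcap_{i=1}^{n}\big[K_{A(i)}\cup\{\gamma\}\big]$, which is the asserted region. I do not expect a genuine obstacle here: the argument is a direct pairing of Theorem~\ref{sub2} with Theorem~\ref{Bau}, entirely parallel to Theorem~\ref{gers}. The one point that needs care is the bookkeeping of the index sets and entries of the reduced matrix $A(i)$ — in particular whether the summation index in the deleted absolute row sums ranges over $I\setminus\{j\}$ or over $I\setminus\{i,j\}$; since enlarging a Cassini-oval radius only enlarges the oval, the stated inclusion holds under either convention.
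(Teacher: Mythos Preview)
Your proposal is correct and follows exactly the paper's own argument: apply Theorem~\ref{Bau} to the reduced matrix $A(i)$, translate its diagonal entries and deleted absolute row sums back in terms of the $a_{jk}$ via Theorem~\ref{sub2}, and then intersect over $i$. Your closing remark about the summation range ($I\setminus\{j\}$ versus $I\setminus\{i,j\}$) is a nice piece of care that the paper itself glosses over; as you note, it only enlarges the radius and so does not affect the inclusion.
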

\begin{proof}
    By Theorem \ref{Bau}, each eigenvalue of $A(i),\ i=1,2,\ldots,n$ lies in the region
    $K_{A(i)}$. Now, using Theorem \ref{sub2}, we get
    \begin{eqnarray*}
        K_{A(i)}&=&\bigcup_{\substack{j,k\in I\setminus\{i\}\\j\neq k}}\Big\{z\in\mathbb{C}:|z-A(i)_{jj}||z-A(i)_{kk}|\leq\big(\sum_{l\neq j}|A(i)_{jl}|\big)\big(\sum_{m\neq k}|A(i)_{km}|\big)\Big\},\\
        &=&\bigcup_{\substack{j,k\neq i\\j\neq k}}^n\Big\{z\in\mathbb{C}:|z-a_{jj}+a_{ij}||z-a_{kk}+a_{ik}|\leq \Big(\sum_{l\neq j}|a_{jl}-a_{il}|\Big)\Big(\sum_{m\neq k}|a_{km}-a_{im}|\Big)\Big\}.
    \end{eqnarray*}
Since $A$ is similar to the matrix $\left[\begin{array}{cc}
\gamma&a(i)^T\\
0&A(i)\end{array}\right],\ i=1,2,\ldots,n,$
 all eigenvalues of $A$ lie in the region $K_A(i)\cup\{\gamma\}$, for all $i=1,2,\ldots,n.$ Hence
the eigenvalues of $A$ lie in the region
$$\bigcap_{i=1}^n \Big{[}K_{A(i)}\cup\{\gamma\}\Big{]}.$$
\end{proof}
\section{Eigenvalue bounds for the adjacency matrix of regular graphs}\label{Sec3}
For a graph $G$, let $A$ denote the adjacency matrix of $G$. Let $\lambda_1 \geq \lambda_2 \geq \dots \lambda_n$ be the eigenvalues of $A$. In this section, we derive bounds for the eigenvalues of the adjacency matrix of $d$-regular graphs.
\begin{theorem}
    \label{adj_th1}
    Let $G$ be a connected $d$-regular graph on $n$ vertices. Then
        $$-\frac{d}{n-1}-\frac{1}{n-1}\sqrt{(n-2)[nd(n-d-1)]}\leq\lambda_n\leq -\frac{d}{n-1}-\frac{1}{n-1}\sqrt{\frac{nd(n-d-1)}{n-2}}$$
    and
    $$-\frac{d}{n-1}+\frac{1}{n-1}\sqrt{\frac{nd(n-d-1)}{n-2}}\leq\lambda_2\leq-\frac{d}{n-1}+\frac{1}{n-1}\sqrt{(n-2)[nd(n-d-1)]}.$$
\end{theorem}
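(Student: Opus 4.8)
The strategy is to pass from $A$ to the $(n-1)\times(n-1)$ matrix $A(k)$ furnished by Theorem \ref{sub2}, and then apply the trace inequalities of Theorem \ref{tr1} to $A(k)$. Since $G$ is $d$-regular, its adjacency matrix $A$ is symmetric with zero diagonal and constant row sum $\gamma=d$, and since $G$ is connected, $d=\lambda_1$ is a simple eigenvalue. Fix any $k\in\{1,\dots,n\}$. By Theorem \ref{sub2}, $A$ is similar to a block upper triangular matrix with diagonal blocks $[d]$ and $A(k)=A(k|k)-j_{n-1}a(k)$; hence the multiset of eigenvalues of $A$ is $\{d\}$ together with the eigenvalues of $A(k)$. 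Removing the one copy of the simple eigenvalue $d=\lambda_1$, the spectrum of $A(k)$ is exactly $\lambda_2\ge\lambda_3\ge\cdots\ge\lambda_n$; in particular these numbers are real, $\lambda_2$ is the largest eigenvalue of $A(k)$, and $\lambda_n$ is the smallest. So Theorem \ref{tr1} applied to $B=A(k)$ (of order $n-1$) will bound $\lambda_2$ and $\lambda_n$.

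The computational core is to evaluate $\text{trace}\,A(k)$ and $\text{trace}\,A(k)^2$. Reading off entries, $(A(k))_{ij}=a_{ij}-a_{kj}$ for $i,j\in\{1,\dots,n\}\setminus\{k\}$, so $(A(k))_{ii}=-a_{ki}$ and therefore
\[
\text{trace}\,A(k)=-\sum_{i\neq k}a_{ki}=-d .
\]
For the square, using $a_{ij}=a_{ji}$,
\[
\text{trace}\,A(k)^2=\sum_{i\neq k}\sum_{j\neq k}(a_{ij}-a_{ki})(a_{ij}-a_{kj})=\sum_{i,j\neq k}\bigl(a_{ij}^2-a_{ij}a_{kj}-a_{ki}a_{ij}+a_{ki}a_{kj}\bigr).
\]
I would then evaluate the four sums separately using the $0/1$ structure ($a_{ij}^2=a_{ij}$), the edge count $\sum_{i,j\neq k}a_{ij}=nd-2d=d(n-2)$, and the identities $\sum_{i\neq k}a_{ij}=d-a_{kj}$ and $\sum_{j\neq k}a_{ij}=d-a_{ik}$ that follow from every row and column sum of $A$ being $d$. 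The two cross terms each equal $d(d-1)$ (they coincide by symmetry), the last term equals $d^2$, and thus
\[
\text{trace}\,A(k)^2=d(n-2)-2d(d-1)+d^2=d(n-d).
\]

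With $B=A(k)$ of order $n-1$, Theorem \ref{tr1} then gives
\[
m=\frac{-d}{n-1},\qquad s^2=\frac{d(n-d)}{n-1}-\frac{d^2}{(n-1)^2}=\frac{nd(n-d-1)}{(n-1)^2},
\]
so $s=\frac{1}{n-1}\sqrt{nd(n-d-1)}$. Substituting these values of $m$ and $s$ into (\ref{tr_eqn1}) and (\ref{tr_eqn2}) with $n$ replaced by $n-1$, so that $(n-1)-1=n-2$ appears under the roots and in the denominators, and using that the extreme eigenvalues of $B$ are $\lambda_2$ and $\lambda_n$, produces exactly the four stated inequalities.

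The only genuine obstacle is the bookkeeping in $\text{trace}\,A(k)^2$: one must consistently exclude the index $k$ from every sum and repeatedly reduce products via the $0/1$ and constant-row/column-sum properties; there is no conceptual difficulty beyond this. As sanity checks, the appearance of $n-2$ in the bounds forces $n\ge 3$, and the boundary case $d=n-1$ (that is, $G=K_n$), where $s=0$ and all four bounds collapse to $\lambda_2=\lambda_n=-1$, is consistent with the known spectrum of $K_n$.
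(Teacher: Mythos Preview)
Your proposal is correct and follows the same overall approach as the paper: pass to the $(n-1)\times(n-1)$ matrix $A(k)$ via Theorem~\ref{sub2} and apply Theorem~\ref{tr1}. The one simplification you missed is that the paper computes the traces spectrally rather than entrywise: since the eigenvalues of $A(k)$ are $\lambda_2,\dots,\lambda_n$, one has $\text{trace }A(k)=\sum_{i=2}^n\lambda_i=-\lambda_1=-d$ (because $\text{trace }A=0$) and $\text{trace }A(k)^2=\sum_{i=2}^n\lambda_i^2=\text{trace }A^2-\lambda_1^2=nd-d^2$, which avoids all of the $0/1$ bookkeeping you flagged as the main obstacle.
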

\begin{proof}
By Theorem \ref{sub1}, all the eigenvalues of the adjacency matrix of $G$ other than $d$ are also eigenvalues of  $A(k)$ for $k=1,2,\ldots,n$. Now for any $k \in \{1,\ldots, n\}$,
$$trace\ A(k)=-\lambda_1=-d,$$and
$$trace\  A(k)^2=\sum \lambda_i^2-\lambda_1^2 =nd-d^2.$$
Now, using Theorem \ref{tr1}, we establish the required bounds. Since $A(k)$ is a matrix of order $n-1$, we have
$$m=\frac{\text{trace } A(k)}{n-1}=-\frac{d}{n-1}\ \ \text{ and }\ \ s^2=\frac{\text{trace } A(k)^2}{n-1}-m^2=\frac{nd(n-d-1)}{(n-1)^2}. $$
Therefore, by Theorem \ref{tr1},
    $$-\frac{d}{n-1}-\frac{1}{n-1}\sqrt{(n-2)[nd(n-d-1)]}\leq\lambda_n\leq -\frac{d}{n-1}-\frac{1}{n-1}\sqrt{\frac{nd(n-d-1)}{n-2}},$$
    and
    $$-\frac{d}{n-1}+\frac{1}{n-1}\sqrt{\frac{nd(n-d-1)}{n-2}}\leq\lambda_2\leq-\frac{d}{n-1}+\frac{1}{n-1}\sqrt{(n-2)[nd(n-d-1)]}.$$
\end{proof}

The following result is known for the Laplacian matrix of connected graph.
\begin{theorem}\cite{Das2}
    \label{lap_th2}
    Let G be a simple connected graph with $n\geq 3$ vertices and let $0=\lambda_n<\lambda_{n-1}\leq\cdots \leq \lambda_1 $ be the Laplacian eigenvalues of  $G$. Then
    \begin{itemize}
        \item[(i)] $\lambda_{n-1}=\lambda_{n-2}=\cdots=\lambda_2$ if and only if G is a complete graph or a star graph or a regular complete bipartite graph.
        \item[(ii)] $\lambda_{n-2}=\cdots=\lambda_{2}=\lambda_1$ if and only if G is a complete graph or a graph $K_n-e$, where e is any edge.
    \end{itemize}
\end{theorem}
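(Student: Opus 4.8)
The plan is to treat, in each part, the degenerate case in which $L$ has only two distinct eigenvalues separately from the case of three distinct eigenvalues, to settle part (i) directly, and then to deduce part (ii) from part (i) by passing to the complement. The forward implications $(\Leftarrow)$ are pure spectral bookkeeping: one records $\mathrm{spec}\,L(K_n)=\{0,n^{(n-1)}\}$, $\mathrm{spec}\,L(K_{1,n-1})=\{0,1^{(n-2)},n\}$, $\mathrm{spec}\,L(K_{m,m})=\{0,m^{(n-2)},n\}$ (with $n=2m$), and $\mathrm{spec}\,L(K_n-e)=\{0,\,n-2,\,n^{(n-2)}\}$ (e.g.\ from the join spectrum formula, since $K_n-e=K_{n-2}\vee\overline{K_2}$); the asserted equalities among the $\lambda_i$ are then immediate.

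For the converse of (i), let $G$ be connected on $n\ge3$ vertices with $\lambda_2=\dots=\lambda_{n-1}=:\mu>0$. If $\mu=\lambda_1$, then $L$ has only the eigenvalues $0$ and $\mu$, so $L=\mu\bigl(I-\tfrac1nJ\bigr)$; reading the entries of $A=D-L$ and using that $A$ is a $0$--$1$ matrix with zero diagonal shows $G$ is regular with every off-diagonal entry equal to $\mu/n\in\{0,1\}$, and since $\mu>0$ this forces $\mu=n$ and $G=K_n$. If $\mu<\lambda_1$, the Laplacian has exactly the three distinct eigenvalues $0,\mu,\lambda_1$; I would take a unit $\lambda_1$-eigenvector $w$, necessarily orthogonal to $e$, and write the spectral decomposition
\[
L=\mu I-\tfrac{\mu}{n}J+(\lambda_1-\mu)\,ww^{T}.
\]
Comparing with $A=D-L$, the off-diagonal entries give $(\lambda_1-\mu)\,w_iw_j=\tfrac{\mu}{n}-a_{ij}$ for $i\ne j$, so $w_iw_j=\tfrac{\mu}{n}>0$ when $i\not\sim j$ and $w_iw_j=\tfrac{\mu}{n}-1<0$ (since $\mu<\lambda_1\le n$) when $i\sim j$. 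In particular no $w_i$ vanishes, because $G$ is connected and a zero coordinate at a vertex with a neighbour would violate the second relation. Hence the signs of the $w_i$ split $V$ into two nonempty classes $X,Y$, and the two relations say exactly that all edges of $G$ run between $X$ and $Y$ and that \emph{every} pair with one endpoint in each of $X,Y$ is an edge; thus $G=K_{|X|,|Y|}$. Matching its spectrum $\{0,\,|X|^{(|Y|-1)},\,|Y|^{(|X|-1)},\,n\}$ against the hypothesis forces $|X|=|Y|$ or $\min(|X|,|Y|)=1$, i.e.\ $G=K_{m,m}$ or $G=K_{1,n-1}$.

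For the converse of (ii), let $G$ be connected with $\lambda_1=\dots=\lambda_{n-2}$ and set $\nu=\lambda_{n-1}$. If $\nu=\lambda_1$ the two-eigenvalue argument above again gives $G=K_n$. Otherwise $0<\nu<\lambda_1$; since $L(\overline G)$ agrees with $nI-L(G)$ on $e^{\perp}$, the complement has Laplacian spectrum $\{0,\,(n-\lambda_1)^{(n-2)},\,n-\nu\}$ with $n-\lambda_1<n-\nu$. If $\lambda_1<n$, then $0$ is simple in this spectrum (so $\overline G$ is connected) and $\overline G$ satisfies the hypothesis of part (i); by the already-proven part (i), $\overline G\in\{K_n,K_{1,n-1},K_{m,m}\}$, but each of these has a disconnected complement, contradicting connectedness of $G$. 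Hence $\lambda_1=n$, so $0$ has multiplicity $n-1$ in $\mathrm{spec}\,L(\overline G)$, i.e.\ $\overline G$ has $n-1$ components; a graph on $n\ge3$ vertices with $n-1$ components has precisely one edge, so $\overline G=K_2\cup(n-2)K_1$ and $G=K_n-e$.

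The only genuinely delicate step is the converse of (i): the point is that the hypothesis forces $L$ to be a scalar multiple of $I-\tfrac1nJ$ plus a rank-one term, which converts the arithmetic constraint ``$a_{ij}\in\{0,1\}$'' into a sign condition on the coordinates of the top Laplacian eigenvector and thereby pins $G$ down as complete bipartite. With part (i) available, part (ii) is a short complementation argument and presents no real obstacle.
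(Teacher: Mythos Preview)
The paper does not supply its own proof of this statement: Theorem~\ref{lap_th2} is quoted from \cite{Das2} and used only as a black box to pin down the equality cases in Theorems~\ref{adj_th1} and~\ref{lap_th1}. There is therefore nothing in the present paper to compare your argument against.

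That said, your proof is correct and self-contained. The key step in the converse of (i)---writing $L=\mu\bigl(I-\tfrac1nJ\bigr)+(\lambda_1-\mu)\,ww^{T}$ and reading off from the $\{0,1\}$-constraint on the off-diagonal of $A=D-L$ that the sign pattern of $w$ makes $G$ complete bipartite---is clean and avoids any appeal to structural classification results for graphs with few Laplacian eigenvalues. The deduction of (ii) from (i) via $L(G)+L(\overline G)=nI-J$ is standard, and your dichotomy $\lambda_1<n$ versus $\lambda_1=n$ disposes of the two cases (connected complement satisfying (i), versus complement with $n-1$ components) correctly. One small remark: in ``since $\mu<\lambda_1\le n$'' you are tacitly using the well-known bound $\lambda_1(L)\le n$; this is legitimate but is an external input worth naming explicitly.
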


\begin{remark}
    It is well known that, a connected graph $G$ has two distinct eigenvalues if and only if $G$ is  complete, and a connected $d$-regular graph has three distinct eigenvalues if and only if the graph is  strongly regular\cite{Brou}. Now, for $d$-regular graphs, we have $A=dI_n-L$. For a connected graph $G$, using Theorem \ref{lap_th2},  we can observe that the following are equivalent:
\begin{enumerate}
\item  $G$ is $d$-regular and $G$ has three distinct eigenvalues with multiplicity of one eigenvalue is $n-2$,
\item  $G$ is strongly regular with degree of each vertex is $d$ and multiplicity of one eigenvalue is $n-2$,
\item  $G$ is $d$-regular complete bipartite.
\end{enumerate}
Now considering the equality conditions in Theorem \ref{tr1} we observe that, for equality in the inequalities of Theorem \ref{adj_th1}, $A$ must have at most three distinct eigenvalues with multiplicity of one eigenvalue is at least $n-2$. Thus,  using Theorem \ref{tr1} and the above argument, we conclude that\begin{itemize}
    \item[(i)] Equality hold in the right hand side of the inequalities in Theorem \ref{adj_th1} holds if and only if the graph is complete.
    \item[(ii)] Equality hold in the left hand side of the inequalities in Theorem \ref{adj_th1} holds if and only if the graph is complete  or regular complete bipartite.
    \end{itemize}
    \end{remark}

A bipartite graph $G$ is said to be $(c,d)$-biregular, if all vertices on one side of the bipartition have degree $c$ and all vertices on the other side have degree $d$.
In the next theorem, we derive bounds for the second largest eigenvalue of the $(c, d)$-biregular bipartite graphs.

\begin{theorem}
\label{Bip1}
Let $G$ be a connected bipartite graph on $n(\geq4)$ vertices whose vertex set is partitioned as $V=(X,Y)$. Let $d_i=c$ for all $i\in X$ and $d_i=d$ for all $i\in Y$, then
$$\sqrt{\frac{2(|E|-cd)}{(n-2)(n-3)}}\leq\lambda_2\leq\sqrt{\frac{2(n-3)(|E|-cd)}{n-2}}.$$
\end{theorem}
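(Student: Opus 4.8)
The plan is to strip off the two ``known'' extreme eigenvalues of the adjacency matrix $A$ and apply Theorem~\ref{tr1} to the $(n-2)\times(n-2)$ matrix carrying the remaining spectrum, in the same spirit as the proof of Theorem~\ref{adj_th1}, but using bipartiteness in place of regularity.

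First I would record the facts about $A$ that I need. Since $G$ is bipartite, the spectrum of $A$ is symmetric about $0$; in particular $\lambda_n=-\lambda_1$ and $\lambda_{n-1}=-\lambda_2$. Since $G$ is connected and $(c,d)$-biregular, the vector $u$ equal to $\sqrt{c}$ on $X$ and $\sqrt{d}$ on $Y$ satisfies $Au=\sqrt{cd}\,u$ and is strictly positive, so by Perron--Frobenius $\lambda_1=\sqrt{cd}$ and hence $\lambda_n=-\sqrt{cd}$. Finally $\operatorname{trace}A=0$ and $\operatorname{trace}A^2=\sum_{i}d_i=2|E|$. Combining these, $\sum_{i=2}^{n-1}\lambda_i=-\lambda_1-\lambda_n=0$ and $\sum_{i=2}^{n-1}\lambda_i^{2}=2|E|-\lambda_1^{2}-\lambda_n^{2}=2(|E|-cd)$.

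Next I would produce a matrix $M$ of order $n-2$ whose eigenvalues are exactly $\lambda_2\ge\cdots\ge\lambda_{n-1}$. The vector $v$ equal to $\sqrt{c}$ on $X$ and $-\sqrt{d}$ on $Y$ satisfies $Av=-\sqrt{cd}\,v$, and $\langle u,v\rangle=c|X|-d|Y|=|E|-|E|=0$; extending $\{u/\|u\|,\,v/\|v\|\}$ to an orthonormal basis of $\mathbb{R}^{n}$ gives an orthogonal $Q$ with $Q^{\mathsf T}AQ=\operatorname{diag}(\sqrt{cd},-\sqrt{cd})\oplus M$. (Since Theorem~\ref{tr1} only uses the trace and the trace of the square of a matrix, one could equally well just take $M=\operatorname{diag}(\lambda_2,\dots,\lambda_{n-1})$.) For $M$ the quantities in Theorem~\ref{tr1} are $m=\frac{\operatorname{trace}M}{n-2}=0$ and $s^{2}=\frac{\operatorname{trace}M^{2}}{n-2}-m^{2}=\frac{2(|E|-cd)}{n-2}$, and the largest eigenvalue of $M$ is $\lambda_2$. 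Because $n\ge4$, $M$ has order $n-2\ge2$, so Theorem~\ref{tr1}, inequality~(\ref{tr_eqn2}) with $n$ replaced by $n-2$, gives $\dfrac{s}{\sqrt{n-3}}\le\lambda_2\le s\sqrt{n-3}$; substituting the value of $s$ yields the claimed inequalities. (In passing this shows $|E|\ge cd$, since $s^{2}\ge0$.)

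The trace computations and the Perron--Frobenius identification $\lambda_1=\sqrt{cd}$ are routine. The one place where bipartiteness is essential --- and the step I would be most careful about --- is the reduction to order $n-2$: after removing $\lambda_1$ one must also remove $\lambda_n=-\lambda_1$ before invoking Theorem~\ref{tr1}, and it is this ``extra'' deflation (absent in the regular case of Theorem~\ref{adj_th1}) that produces the $n-2$ in the denominators and forces the hypothesis $n\ge4$. The rest is bookkeeping.
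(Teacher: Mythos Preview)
Your proof is correct and follows essentially the same route as the paper: remove the two extreme eigenvalues $\pm\sqrt{cd}$, apply Theorem~\ref{tr1} to an $(n-2)\times(n-2)$ matrix whose spectrum is $\lambda_2,\dots,\lambda_{n-1}$, and read off the bound on $\lambda_2$ from~(\ref{tr_eqn2}). The only differences are cosmetic --- you supply the Perron--Frobenius justification for $\lambda_1=\sqrt{cd}$ and an explicit orthogonal deflation to produce $M$, whereas the paper simply asserts $\lambda_1=\sqrt{cd}$ and takes an abstract matrix $B$ with the right eigenvalues.
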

\begin{proof}
Since $G$ is $(c,d)$-biregular bipartite, we have $\lambda_1=\sqrt{cd}$ and $\lambda_n=-\sqrt{cd}.$
Let $B$ be a square matrix of order $n-2$ with eigenvalues $\mu_{n-2}\leq\mu_{n-3}\leq\ldots\leq\mu_1$ such that $\mu_i\in\sigma(A)\setminus\{\pm\sqrt{cd}\}$. Then $\mu_1=\lambda_2=-\lambda_{n-1}=-\mu_{n-2}$. Now, by applying Theorem \ref{tr1}, we get
$$m=\frac{\text{trace }B}{n-2}=\frac{\text{trace }A}{n-2}=0,$$
and
$$s^2=\frac{\text{trace }B^2}{n-2}-m^2=\frac{\text{trace }A^2-(\lambda_1^2+\lambda_n^2)}{n-2}=\frac{2|E|-2cd}{n-2}.$$
Therefore,
$$\sqrt{\frac{(2|E|-2cd)}{(n-2)(n-3)}}\leq\mu_1\leq\sqrt{\frac{(n-3)(2|E|-2cd)}{n-2}}.$$
\end{proof}
\begin{remark}
    \label{adj_rem2}
To prove Theorem \ref{Bip1}, we applied Theorem \ref{tr1} to a square matrix $B$ of order $n-2$. Now using the equality conditions of Theorem \ref{tr1}, we conclude that equality in Theorem \ref{Bip1} holds if and only if $G$ has an eigenvalue other than $\pm\sqrt{cd}$ with multiplicity at least $n-3$. Now since the eigenvalues of bipartite graphs are symmetric about $0$, we conclude that the equality in both side of Theorem \ref{Bip1} holds if and only if $G=K_{c,d}$.
\end{remark}
\begin{corollary}
Let $G$ be a connected $d$-regular bipartite graph on $n(\geq4)$ vertices. Then
$$\sqrt{\frac{d(n-2d)}{(n-2)(n-3)}}\leq\lambda_2\leq\sqrt{\frac{d(n-3)(n-2d)}{n-2}}.$$
\end{corollary}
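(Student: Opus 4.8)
The plan is to recognize this corollary as the specialization $c=d$ of Theorem \ref{Bip1}, together with the elementary count of edges in a $d$-regular graph. First I would note that a connected $d$-regular bipartite graph with bipartition $V=(X,Y)$ is in particular $(c,d)$-biregular with $c=d$ (indeed, counting edges across the bipartition forces $|X|=|Y|=n/2$, but we will not even need this). Hence Theorem \ref{Bip1} applies directly with the parameter $c$ replaced by $d$.

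Next I would substitute the number of edges. Since $G$ is $d$-regular on $n$ vertices, $2|E|=\sum_{i=1}^n d_i = nd$, so $|E|=nd/2$. Therefore
$$
2(|E|-cd) \;=\; 2\Big(\tfrac{nd}{2}-d^2\Big) \;=\; d(n-2d).
$$
Plugging this into the two bounds of Theorem \ref{Bip1},
$$
\sqrt{\frac{2(|E|-cd)}{(n-2)(n-3)}} \;=\; \sqrt{\frac{d(n-2d)}{(n-2)(n-3)}},
\qquad
\sqrt{\frac{2(n-3)(|E|-cd)}{n-2}} \;=\; \sqrt{\frac{d(n-3)(n-2d)}{n-2}},
$$
which gives exactly the claimed inequalities
$$
\sqrt{\frac{d(n-2d)}{(n-2)(n-3)}}\;\leq\;\lambda_2\;\leq\;\sqrt{\frac{d(n-3)(n-2d)}{n-2}}.
$$

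There is essentially no obstacle here: the entire content is already contained in Theorem \ref{Bip1}, and the corollary is obtained by a one-line edge count plus substitution. The only minor point worth a remark (as an analogue of Remark \ref{adj_rem2}) is the equality case: since the argument is inherited from Theorem \ref{Bip1}, equality on either side holds if and only if $G=K_{d,d}$, using again the symmetry of the spectrum of a bipartite graph about $0$ and the fact that equality requires an eigenvalue other than $\pm d$ of multiplicity at least $n-3$.
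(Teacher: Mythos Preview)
Your proof is correct and follows exactly the paper's approach: apply Theorem \ref{Bip1} with $c=d$ and substitute $|E|=nd/2$ from the handshake lemma. The paper's proof is the same two-line argument, without the explicit algebra or the equality discussion you added.
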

\begin{proof}
Since $G$ is $d$-regular bipartite graph on $n$ vertices, we have $$|E|=\frac{nd}{2}.$$
Now the result follows from Theorem \ref{Bip1} by taking $c=d$.
\end{proof}

In the next theorem, we establish bounds for the second largest and the smallest eigenvalue of any $d$-regular graph in terms of the number of common neighbors of its vertices using Theorem \ref{gers}.
\begin{theorem}
    \label{adj_th2}
Let $G$ be a connected $d$-regular graph on $n$ vertices. Then
$$-2d+\max_{i\in G}\{\min_{k\neq i}\{\alpha_{ik}\},d\}\leq \lambda_n\leq\lambda_2\leq 2d-\max_{i\in G}\{\min_{k\neq i}\{\beta_{ik}\},d\},$$
where, for $k\neq i$, $\alpha_{ik}$ and $\beta_{ik}$ are given by
$$\alpha_{ik}=\begin{cases}
1+2N(i,k),&\textit{ if }k\sim i\\
2N(i,k),&\textit{ if }k\nsim i
\end{cases}$$
and
$$\beta_{ik}=\begin{cases}
3+2N(i,k),&\textit{ if }k\sim i\\
2N(i,k),&\textit{ if }k\nsim i.
\end{cases}$$
\end{theorem}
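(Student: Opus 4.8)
The plan is to apply the Ger\v{s}gorin-type localization of Theorem \ref{gers} to $A$, whose rows all sum to $\gamma=d$, and then read off the resulting real intervals. First I would note that, since the eigenvalues of $A$ are real, for each $i$ it suffices to intersect the region $G_{A(i)}\cup\{d\}$ with $\mathbb{R}$. For $k\neq i$ the $k$-th Ger\v{s}gorin disc of $A(i)$ has centre $A(i)_{kk}=a_{kk}-a_{ik}=-a_{ik}$ (which is $-1$ when $k\sim i$ and $0$ when $k\nsim i$, since $a_{kk}=0$) and radius $r_{ik}=\sum_{j\in I\setminus\{i,k\}}|a_{kj}-a_{ij}|$, so that $G_{A(i)}(k)\cap\mathbb{R}$ is the interval $[-a_{ik}-r_{ik},\,-a_{ik}+r_{ik}]$.

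The main computation is to evaluate $r_{ik}$ in terms of $N(i,k)$. Here I would observe that $|a_{kj}-a_{ij}|$ equals $1$ exactly when $j$ is adjacent to precisely one of $i$ and $k$, and $0$ otherwise. Counting over $j\in I\setminus\{i,k\}$: there are $d-a_{ik}$ vertices adjacent to $k$ (the $d$ neighbours of $k$, less $i$ itself when $i\sim k$), of which $N(i,k)$ are also adjacent to $i$; hence $d-a_{ik}-N(i,k)$ vertices are adjacent to $k$ but not $i$, and symmetrically $d-a_{ik}-N(i,k)$ are adjacent to $i$ but not $k$. This gives $r_{ik}=2d-2a_{ik}-2N(i,k)$. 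Substituting this and splitting into the cases $k\sim i$ (centre $-1$, radius $2d-2-2N(i,k)$) and $k\nsim i$ (centre $0$, radius $2d-2N(i,k)$), a short simplification yields
$$G_{A(i)}(k)\cap\mathbb{R}=\big[-2d+\alpha_{ik},\,2d-\beta_{ik}\big],$$
with $\alpha_{ik}$ and $\beta_{ik}$ exactly the quantities in the statement.

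It then follows from Theorem \ref{gers} that, for every $i$, all eigenvalues of $A$ lie in $\{d\}\cup\bigcup_{k\neq i}[-2d+\alpha_{ik},\,2d-\beta_{ik}]$. Since $G$ is connected, $A$ is irreducible and nonnegative, so by the Perron--Frobenius theorem $\lambda_1=d$ is a simple eigenvalue and $|\lambda_j|\le d$ for all $j$; in particular $\lambda_n,\lambda_2\neq d$ and $-d\le\lambda_n\le\lambda_2\le d$. Because $\lambda_n$ and $\lambda_2$ avoid the isolated point $d$, for each $i$ they lie in $\bigcup_{k\neq i}[-2d+\alpha_{ik},\,2d-\beta_{ik}]$, so $\lambda_n\ge-2d+\min_{k\neq i}\alpha_{ik}$ and $\lambda_2\le 2d-\min_{k\neq i}\beta_{ik}$. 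Optimizing over $i$, combining with the elementary bounds $\lambda_n\ge-d$ and $\lambda_2\le d$, and using that $\max$ distributes over the outer maximum, I obtain
$$\lambda_n\ge -2d+\max_{i}\big\{\min_{k\neq i}\alpha_{ik},\,d\big\}\qquad\text{and}\qquad \lambda_2\le 2d-\max_{i}\big\{\min_{k\neq i}\beta_{ik},\,d\big\},$$
which together with $\lambda_n\le\lambda_2$ is the assertion.

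The hard part is the radius computation and the ensuing case analysis: one must be careful about which vertices ($i$, $k$, and the common neighbours) are excluded from the deleted row sum of $A(i)$, and keep the two parities of $a_{ik}$ straight so that centre and radius combine into the clean interval $[-2d+\alpha_{ik},\,2d-\beta_{ik}]$. The only other points needing a word are that the spurious isolated value $d$ in the inclusion region must be discarded (legitimate because $d=\lambda_1$ is simple for a connected graph), and that the trivial spectral-radius bounds $-d\le\lambda_n$ and $\lambda_2\le d$ are precisely what produces the term $d$ appearing inside the maxima.
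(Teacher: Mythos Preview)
Your proposal is correct and follows essentially the same route as the paper: apply Theorem~\ref{gers} to the adjacency matrix, compute the centre $-a_{ik}$ and radius $r_{ik}=2d-2a_{ik}-2N(i,k)$ of each disc $G_{A(i)}(k)$ by splitting into the cases $k\sim i$ and $k\nsim i$, read off the real intervals $[-2d+\alpha_{ik},\,2d-\beta_{ik}]$, and then optimize over $i$ while invoking the spectral-radius bound $|\lambda|\le d$ to account for the extra $d$ inside the maxima. Your write-up is in fact a bit more explicit than the paper's about two points the paper leaves implicit: that Perron--Frobenius guarantees $\lambda_1=d$ is simple (so $\lambda_2,\lambda_n$ genuinely avoid the isolated value $d$), and that the trivial bounds $-d\le\lambda_n$, $\lambda_2\le d$ are precisely what produce the $d$ appearing alongside $\min_{k\neq i}\alpha_{ik}$ and $\min_{k\neq i}\beta_{ik}$.
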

\begin{proof}
Let $\lambda$ be any eigenvalue of $A$ other than $d$. Then
by Theorem \ref{sub1}, $\lambda$  is also an eigenvalue of $A(i)=A(i|i)-\textbf{\textit{j}}_{n-1}\textbf{a}(i)^T$, where $a(i)^T=\left[\begin{array}{cccccc}
a_{i1}&\cdots&a_{i,i-1}&a_{i,i+1}&\cdots&a_{in}
\end{array}\right]$, for $i=1,2,\ldots,n$. So, by Theorem \ref{gers}, $\lambda$ lies in the region $G_{A(i)}$ with $$G_{A(i)}=\bigcup_{\substack{k=1\\k\neq i}}^n\Big{\{}z\in\mathbb{C}:|z+a_{ik}|\leq\sum_{\substack{j\neq k\\j\neq i}}|a_{kj}-a_{ij}|\Big{\}}=\bigcup_{\substack{k=1\\k\neq i}}^nG_{A(i)}(k).$$
 Now, for the vertex $k\in G$, $k\neq i$, we calculate the center and the radius of $G_{A(i)}(k)$ :\\
\textbf{Case I: }Let $k\sim i$. Then the disc $G_{A(i)}(k)$ is given by
\begin{eqnarray*}
    |z+1|&\leq&\sum_{j\neq i,k}|a_{kj}-a_{ij}|\\
    &=&\sum_{\substack{j\sim i,\\j\sim k}}|a_{kj}-a_{ij}|+\sum_{\substack{j\nsim i,\\j\sim k}}|a_{kj}-a_{ij}|+ \sum_{\substack{j\sim i,\\j\nsim k}}|a_{kj}-a_{ij}|+\sum_{\substack{j\nsim i,\\j\nsim k}}|a_{kj}-a_{ij}|\\
    &=&0+d-N(i,k)-1+d-N(i,k)-1+0\\
    &=&2d-2N(i,k)-2.
\end{eqnarray*}
\textbf{Case II: }Let $k\nsim i$. Then $a_{ik}=0$ and $a_{ki}=0$. Thus, we have
\begin{eqnarray*}
    |z|&\leq&\sum_{j\neq i,k}|a_{kj}-a_{ij}|\\
    &=&\sum_{\substack{j\sim i,\\j\sim k}}|a_{kj}-a_{ij}|+\sum_{\substack{j\nsim i,\\j\sim k}}|a_{kj}-a_{ij}|+ \sum_{\substack{j\sim i,\\j\nsim k}}|a_{kj}-a_{ij}|+\sum_{\substack{j\nsim i,\\j\nsim k}}|a_{kj}-a_{ij}|\\
    &=&0+d-N(i,k)+d-N(i,k)+0\\
    &=&2d-2N(i,k).
\end{eqnarray*}
Now, by combining  Case I and Case II, we can conclude that any eigenvalue $\lambda\neq d$ of $A$ must satisfy
$$-2d+\min_{k\neq i}\{\alpha_{ik}\}\leq \lambda\leq 2d-\min_{k\neq i}\{\beta_{ik}\},$$
for all $i=1,2,\ldots,n.$\\
Since $d$ is the spectral radius of $A$ and the above result is true for all $i=1,2,\ldots,n$, we have,
$$-2d+\max_{i\in G}\{\min_{k\neq i}\{\alpha_{ik}\},d\}\leq \lambda_n\leq\lambda_2\leq 2d-\max_{i\in G}\{\min_{k\neq i}\{\beta_{ik}\},d\}.$$
\end{proof}

\begin{remark}
Let $G=K_n$, the complete graph on $n$ vertices, then $d_i=n-1$ for all $i\in G$ and $N(i,k)=n-2$ for all $i,k\in G$. Therefore $\alpha_{ik}=2n-3$ and $\beta_{ik}=2n-1$. By Theorem \ref{adj_th2}, we have $$-1\leq\lambda_n\leq\lambda_2\leq-1.$$
Thus the bounds obtained in Theorem \ref{adj_th2} are sharp for the complete graphs.
\end{remark}

In the next theorem, we establish bounds for the second largest and the smallest eigenvalue of any $d$-regular graph in terms of the number of common neighbors of its vertices using Theorem \ref{baur1main}.
\begin{theorem}
    \label{adj_th3}
    Let $G$ be a connected $d$-regular graph on $n$ vertices. Then
    $$\max_{i\in G}\min_{\substack{j,k\in I\setminus \{i\}\\j\neq k}}\{\alpha_{ijk}\}\leq \lambda_n\leq\lambda_2\leq\min_{i\in G}\max_{\substack{j,k\in I\setminus \{i\}\\j\neq k}}\{\beta_{ijk}\}.$$
    where, for $j,k\neq I\setminus \{i\}$, $j\neq{k}$,
    $$\alpha_{ijk}=\begin{cases}
        -1-2\sqrt{(d-N(i,j)-1)(d-N(i,k)-1)},&\text{ if }j\sim i,\ k\sim i,\\
    -2\sqrt{(d-N(i,j))(d-N(i,k))},&\text{ if }j\nsim i,\ k\nsim i,\\
    -\frac{1}{2}-\sqrt{\frac{1}{4}+4(d-N(i,j)-1)(d-N(i,k))},&\text{ if }j\sim i,\ k\nsim i
    \end{cases}$$
    and
    $$\beta_{ijk}=\begin{cases}
            -1+2\sqrt{(d-N(i,j)-1)(d-N(i,k)-1)},&\text{ if }j\sim k,\ j\sim i,\\
        2\sqrt{(d-N(i,j))(d-N(i,k))},&\text{ if }j\nsim i,\ k\nsim i,\\
        -\frac{1}{2}+\sqrt{\frac{1}{4}+4(d-N(i,j)-1)(d-N(i,k))},&\text{ if }j\sim i,\ k\nsim i.
    \end{cases}$$
\end{theorem}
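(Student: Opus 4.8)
The plan is to run the argument of Theorem~\ref{adj_th2} almost verbatim, except that the Ger{\v s}gorin step is replaced by the Brauer--Cassini step of Theorem~\ref{baur1main}. Fix a connected $d$-regular graph $G$ on $n$ vertices (we may assume $n\ge 3$, since otherwise no pair $j\ne k$ in $I\setminus\{i\}$ exists) and let $A$ be its adjacency matrix. Because $G$ is connected, $d=\lambda_1$ is a simple eigenvalue, so $\lambda_2$ and $\lambda_n$ are eigenvalues of $A$ distinct from $d$. By Theorem~\ref{sub1} (equivalently Theorem~\ref{sub2} with $\gamma=d$), any eigenvalue $\lambda\ne d$ of $A$ is an eigenvalue of $A(i)$ for every $i$; recall that $A(i)$ has $(j,\ell)$-entry $a_{j\ell}-a_{i\ell}$, so its $j$-th diagonal entry is $-a_{ij}$. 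Applying Brauer's theorem (Theorem~\ref{Bau}) to each $A(i)$ --- or, equivalently, Theorem~\ref{baur1main} to $A$ and discarding the point $\{d\}$ --- we get that for each $i$ there is a pair $j\ne k$ in $I\setminus\{i\}$ with
$$|\lambda+a_{ij}|\,|\lambda+a_{ik}|\ \le\ r_j(A(i))\, r_k(A(i)).$$

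Next I would insert the deleted absolute row sums of $A(i)$, which are exactly the common-neighbour counts already computed in the proof of Theorem~\ref{adj_th2}: for $j\ne i$ one has $r_j(A(i))=2\bigl(d-N(i,j)-1\bigr)$ if $j\sim i$ and $r_j(A(i))=2\bigl(d-N(i,j)\bigr)$ if $j\nsim i$, and all of these are nonnegative integers. Substituting, and using $a_{ij}=1$ when $j\sim i$ and $a_{ij}=0$ when $j\nsim i$, the oval inequality splits into three cases according to how many of $j,k$ are adjacent to $i$. If both are, it becomes $|\lambda+1|^2\le 4(d-N(i,j)-1)(d-N(i,k)-1)$; if neither is, it becomes $|\lambda|^2\le 4(d-N(i,j))(d-N(i,k))$; and if exactly one is --- say $j\sim i$, $k\nsim i$, which we may assume by symmetry of the oval in $j$ and $k$ --- it becomes $|\lambda+1|\,|\lambda|\le 4(d-N(i,j)-1)(d-N(i,k))$. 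In the first two cases one just takes square roots; in the third one solves the quadratic inequality $|\lambda^2+\lambda|\le c$ with $c=4(d-N(i,j)-1)(d-N(i,k))$ and keeps the outer pair of roots of $\lambda^2+\lambda-c=0$, obtaining $-\tfrac12-\sqrt{\tfrac14+c}\le\lambda\le-\tfrac12+\sqrt{\tfrac14+c}$. Reading off the two endpoints in each case produces exactly $\alpha_{ijk}$ and $\beta_{ijk}$.

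Finally I would assemble the bounds. For each fixed $i$, the real number $\lambda$ lies in the real projection of one of the ovals $K_{jk}(A(i))$, so by the case analysis there is a pair with $\alpha_{ijk}\le\lambda\le\beta_{ijk}$; hence $\min_{j\ne k}\alpha_{ijk}\le\lambda\le\max_{j\ne k}\beta_{ijk}$. Since this holds for every vertex $i$, the inequality $\lambda\ge\min_{j\ne k}\alpha_{ijk}$ for all $i$ forces $\lambda\ge\max_{i}\min_{j\ne k}\alpha_{ijk}$, and symmetrically $\lambda\le\min_{i}\max_{j\ne k}\beta_{ijk}$. Taking $\lambda=\lambda_n$ and $\lambda=\lambda_2$ gives the asserted chain.

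The only delicate point is the mixed case: one must take the \emph{outer} pair of roots of $\lambda^2+\lambda-c=0$ (when $c<\tfrac14$ the inequality $|\lambda^2+\lambda|\le c$ also removes a small gap around $-\tfrac12$, but this is irrelevant to an outer localization), and one must keep straight the quantifier order $\max_i\min_{j,k}$ versus $\min_i\max_{j,k}$. Everything else is the same bookkeeping of $N(i,\cdot)$ used for Theorem~\ref{adj_th2}.
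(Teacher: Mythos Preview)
Your proposal is correct and follows essentially the same route as the paper: reduce to $A(i)$ via Theorem~\ref{sub2}, apply Brauer's ovals (Theorem~\ref{baur1main}), plug in the row sums $r_j(A(i))$ already computed in the proof of Theorem~\ref{adj_th2}, split into the three adjacency cases, and then take $\max_i\min_{j,k}$ and $\min_i\max_{j,k}$. Your explicit remark about the outer roots in the mixed case (and the irrelevant inner gap when $c<\tfrac14$) is a nice clarification that the paper leaves implicit, but otherwise the arguments coincide.
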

\begin{proof}
Since $G$ is a connected $d$ regular graph, each eigenvalue of $A$ other than $d$ is also an eigenvalue of  $A(i)$, for $i\in I$. Let $\lambda\neq d$ be any eigenvalue of $A$, then $$\lambda\in K_{A(i)}=\bigcup_{\substack{j,k\in I\setminus\{i\}\\j\neq k}}K_{ij}(A(i)),\ \forall i\in I,$$ where $K_{ij}(A(i)) =\bigcup_{\substack{k\in I\setminus\{i\}}}\Big\{z\in\mathbb{C}:|z-A(i)_{jj}||z-A(i)_{kk}|\leq\big(\sum_{l\neq j}|A(i)_{jl}|\big)\big(\sum_{m\neq k}|A(i)_{km}|\big)\Big\}.$
Now, for $j\neq i$, the $j$deleted absolute row sum of $A(i)$ is given by:
$$r_j(A(i))=\begin{cases}
2d-2N(i,j)-2,&\text{ if }i\sim j,\\
2d-2N(i,j),&\text{ if }i\nsim j.
\end{cases}$$
Let us compute the regions $K_{jk}(A(i))$. Here three cases arises:\\
\textbf{Case I.} If $j\sim i,\ k\sim i$. Then the region $K_{jk}(A(i))$ is given by
\begin{eqnarray*}
|z+1|^2&\leq&(2d-2N(i,j)-2)(2d-2N(i,k)-2)\\
&=&4(d-N(i,j)-1)(d-N(i,k)-1).
\end{eqnarray*}
\textbf{Case II.} If $j\nsim i,\ k\nsim i$.  Then the region $K_{jk}(A(i))$ is given by
\begin{eqnarray*}
|z|^2&\leq&(2d-2N(i,j))(2d-2N(i,k))\\
&=&4(d-N(i,j))(d-N(i,k)).
\end{eqnarray*}
\textbf{Case III.} If the vertex $i$ is adjacent to exactly one of the vertices $j$ and $k$. Let $i\sim j$ and $i\nsim k$. Then the region $K_{jk}(A(i))$ is given by
\begin{eqnarray*}
|z(z+1)|&\leq&(2d-2N(i,j)-2)(2d-2N(i,k))\\
&=&4(d-N(i,j)-1)(d-N(i,k)).
\end{eqnarray*}
This gives
$$|z+\frac{1}{2}|^2\leq\frac{1}{4}+4(d-N(i,j)-1)(d-N(i,k)).$$
Since the eigenvalues of $A$ are real,  combining all these cases, we get
$$\min_{\substack{j,k\in I\setminus \{i\}\\j\neq k}}\{\alpha_{ijk}\}\leq\lambda\leq\max_{\substack{j,k\in I\setminus \{i\}\\j\neq k}}\{\beta_{ijk}\},\ \forall i\in I.$$
Since the above result is true for $i=1,2,\ldots n.$ Therefore
$$\max_{i\in G}\min_{\substack{j,k\in I\setminus \{i\}\\j\neq k}}\{\alpha_{ijk}\}\leq\lambda\leq\min_{i\in G}\max_{\substack{j,k\in I\setminus \{i\}\\j\neq k}}\{\beta_{ijk}\}.$$
\end{proof}

\begin{remark}
If $G=K_n$ then $\alpha_{ijk}=-1$ and $\beta_{ijk}=-1$ for all  $j,k\in I\setminus\{i\},\ j\neq k$. Thus, when $G=K_n$, Theorem \ref{adj_th3} provides sharp bound for eigenvalues of $G$.
\end{remark}

\section{Eigenvalue bounds for normalized adjacency matrix}\label{Sec4}

%

Let $\mathcal{A}$ denote the normalized adjacency matrix of the graph $G$, and let $\lambda_1 \geq \lambda_2 \geq \dots \geq \lambda_n$ be the eigenvalues of $\mathcal{A}$. In this section, we establish bounds for the eigenvalues of the normalized adjacency matrix connected graphs.
\begin{theorem}
    \label{nadj_th1}
    Let $G$ be a connected graph on $n$ vertices. Then the smallest and the second largest eigenvalue of the normalized adjacency matrix satisfy
    $$-\frac{1}{n-1}-\frac{1}{n-1}\sqrt{(n-2)[2(n-1)R_{-1}(G)-n]}\leq\lambda_n\leq -\frac{1}{n-1}-\frac{1}{n-1}\sqrt{\frac{2(n-1)R_{-1}(G)-n}{n-2}},$$
    and
    $$-\frac{1}{n-1}+\frac{1}{n-1}\sqrt{\frac{2(n-1)R_{-1}(G)-n}{n-2}}\leq\lambda_2\leq-\frac{1}{n-1}+\frac{1}{n-1}\sqrt{(n-2)[2(n-1)R_{-1}(G)-n]}.$$
\end{theorem}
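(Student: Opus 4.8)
The plan is to mimic the proof of Theorem \ref{adj_th1}, with the normalized adjacency matrix $\mathcal{A}=D^{-1}A$ in place of $A$. Since $G$ is connected, $\mathcal{A}$ is an irreducible row-stochastic matrix (each row sums to $1$), so $\gamma=1$ is an eigenvalue and, by Theorem \ref{sub1}, every eigenvalue of $\mathcal{A}$ other than $1$ is an eigenvalue of each of the $(n-1)\times(n-1)$ matrices $\mathcal{A}(k)=\mathcal{A}(k|k)-j_{n-1}a(k)$, $k=1,\dots,n$. Moreover $\mathcal{A}$ is similar to the symmetric matrix $D^{-1/2}AD^{-1/2}$, so its spectrum is real, and connectedness makes the eigenvalue $1$ simple; hence the spectrum of each $\mathcal{A}(k)$ is exactly $\{\lambda_2\ge\cdots\ge\lambda_n\}$, a set of $n-1$ real numbers, and Theorem \ref{tr1} is applicable to $B=\mathcal{A}(k)$.

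The next step is to compute the two traces feeding into Theorem \ref{tr1}. Since $\mathcal{A}_{ii}=a_{ii}/d_i=0$, we have $\operatorname{trace}\mathcal{A}=0$ and therefore $\operatorname{trace}\mathcal{A}(k)=\sum_{i=2}^{n}\lambda_i=-1$. For the second trace, a direct computation gives $(\mathcal{A}^2)_{ii}=\sum_{j}\mathcal{A}_{ij}\mathcal{A}_{ji}=\sum_{j\sim i}\frac{1}{d_id_j}$, so that $\operatorname{trace}\mathcal{A}^2=\sum_i\sum_{j\sim i}\frac{1}{d_id_j}=2\sum_{i\sim j}\frac{1}{d_id_j}=2R_{-1}(G)$ (the factor $2$ coming from the convention that the sum in $R_\alpha(G)=\sum_{i\sim j}d_i^{\alpha}d_j^{\alpha}$ runs over the edges of $G$); consequently $\operatorname{trace}\mathcal{A}(k)^2=\sum_{i=2}^{n}\lambda_i^{2}=2R_{-1}(G)-1$.

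Finally I apply Theorem \ref{tr1} to $B=\mathcal{A}(k)$, which has order $n-1$, so the "$n-1$" appearing in Theorem \ref{tr1} is replaced by "$n-2$". With
$$m=\frac{\operatorname{trace}\mathcal{A}(k)}{n-1}=-\frac{1}{n-1},\qquad s^{2}=\frac{\operatorname{trace}\mathcal{A}(k)^2}{n-1}-m^{2}=\frac{2R_{-1}(G)-1}{n-1}-\frac{1}{(n-1)^{2}}=\frac{2(n-1)R_{-1}(G)-n}{(n-1)^{2}},$$
the inequalities $m-s\sqrt{n-2}\le\lambda_n\le m-s/\sqrt{n-2}$ and $m+s/\sqrt{n-2}\le\lambda_2\le m+s\sqrt{n-2}$ become exactly the asserted bounds after substituting $m$ and $s=\tfrac{1}{n-1}\sqrt{2(n-1)R_{-1}(G)-n}$.

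I do not expect a genuine obstacle here; the computation is routine once Theorem \ref{sub1} and Theorem \ref{tr1} are in hand. The only points that deserve care are (a) justifying that $\mathcal{A}(k)$ has all real eigenvalues, which is needed for Theorem \ref{tr1} and follows from the similarity of $\mathcal{A}$ to $D^{-1/2}AD^{-1/2}$ together with the simplicity of the eigenvalue $1$, and (b) the bookkeeping factor of $2$ in the identity $\operatorname{trace}\mathcal{A}^2=2R_{-1}(G)$; the hypothesis $n\ge 3$ (implicit since the bounds involve $\sqrt{n-2}$) is also used so that $\mathcal{A}(k)$ has order at least $2$.
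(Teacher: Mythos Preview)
Your proof is correct and follows essentially the same route as the paper: apply Theorem \ref{sub1} to pass from $\mathcal{A}$ to $\mathcal{A}(k)$, compute $m=-\frac{1}{n-1}$ and $s^{2}=\frac{2(n-1)R_{-1}(G)-n}{(n-1)^{2}}$ from $\operatorname{trace}\mathcal{A}(k)=-1$ and $\operatorname{trace}\mathcal{A}(k)^{2}=2R_{-1}(G)-1$, and then invoke Theorem \ref{tr1}. You are in fact slightly more careful than the paper in explicitly justifying that the spectrum of $\mathcal{A}(k)$ is real (via similarity to $D^{-1/2}AD^{-1/2}$ and simplicity of the eigenvalue $1$) and in flagging the factor of $2$ and the implicit assumption $n\ge 3$.
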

\begin{proof}
By Theorem \ref{sub1}, any eigenvalue of $\mathcal{A}$ other than $1$ is also an eigenvalue of $\mathcal{A}(k)$. Therefore
$$\text{trace } \mathcal{A}(k)=\text{trace } \mathcal{A}-1\ \text{ and }\ \text{trace } \mathcal{A}(k)^2=\text{trace } \mathcal{A}^{2} - 1,$$
for all $k=1,2,\ldots,n.$\\
Thus, for any $k$, we have
$$m=\frac{\text{trace } \mathcal{A}(k)}{n-1}=-\frac{1}{n-1}$$
and
$$s^2=\frac{\text{trace } \mathcal{A}(k)^2}{n-1}-m^2=\frac{\sum_{i\sim j} \frac{1}{d_id_j}-1}{n-1}-\frac{1}{(n-1)^2}=\frac{2(n-1)R_{-1}(G)-n}{(n-1)^2}. $$
Therefore, by Theorem \ref{tr1}, we have
$$-\frac{1}{n-1}-\frac{1}{n-1}\sqrt{(n-2)[2(n-1)R_{-1}(G)-n]}\leq\lambda_n\leq -\frac{1}{n-1}-\frac{1}{n-1}\sqrt{\frac{2(n-1)R_{-1}(G)-n}{n-2}}$$
and
$$-\frac{1}{n-1}+\frac{1}{n-1}\sqrt{\frac{2(n-1)R_{-1}(G)-n}{n-2}}\leq\lambda_2\leq-\frac{1}{n-1}+\frac{1}{n-1}\sqrt{(n-2)[2(n-1)R_{-1}(G)-n]}.$$
\end{proof}

\begin{remark}
If equality holds in Theorem \ref{nadj_th1}, then the graph $G$ must have at most three normalized adjacency eigenvalues such that multiplicity of one eigenvalue at least $n-2$. Now recall that, $G=K_n$  is the only graph with multiplicity of $\lambda_2(=\frac{-1}{n-1})$ is $n-2$(\ref{}), and $G=K_{p,q}$ is the only graph with three eigenvalues (see Remark 2.6.5, \cite{Mcav}) with multiplicity of the eigenvalue is $n-2$. Thus using the equality conditions of Theorem \ref{tr1}, we conclude that
 \begin{itemize}
    \item[(i)]  equality holds in the right hand side of both inequalities in Theorem \ref{nadj_th1}  if and only if the graph is a complete graph, and
    \item[(ii)] equality holds in the left hand side of both inequalities in Theorem \ref{nadj_th1}  if and only if the graph is a complete graph or a complete bipartite graph.
 \end{itemize}
\end{remark}
Now, let us establish  bounds for the eigenvalues of bipartite graphs.
\begin{theorem}
\label{nadj_th2}
Let $G$ be a connected bipartite graph on $n$ vertices. Then
$$\sqrt{\frac{2(R_{-1}(G)-1)}{(n-2)(n-3)}}\leq\lambda_2\leq\sqrt{\frac{2(n-3)(R_{-1}(G)-1)}{(n-2)}}.$$
\end{theorem}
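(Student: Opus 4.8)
The plan is to adapt the argument of Theorem~\ref{Bip1}, working with the normalized adjacency matrix $\mathcal{A}=D^{-1}A$ in place of the adjacency matrix. First I would record the spectral facts that make the deletion trick work: since $G$ is connected, $\mathcal{A}$ is irreducible and row-stochastic, so $\lambda_1=1$; and since $G$ is bipartite, $\mathcal{A}$ is similar (via $D^{1/2}$) to the symmetric matrix $D^{-1/2}AD^{-1/2}$, which is block off-diagonal with respect to the bipartition and hence has spectrum symmetric about $0$. Consequently $\lambda_n=-1$, and since $\pm1$ are simple the reduced list $\lambda_2\ge\cdots\ge\lambda_{n-1}$ is still symmetric about $0$, so $\lambda_{n-1}=-\lambda_2$.

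Next, as in the proof of Theorem~\ref{Bip1}, let $B$ be a real matrix of order $n-2$ — for instance the diagonal matrix built from the relevant eigenvalues — whose eigenvalues $\mu_{n-2}\le\cdots\le\mu_1$ are exactly the eigenvalues of $\mathcal{A}$ other than $+1$ and $-1$; by the symmetry above, $\mu_1=\lambda_2$. I then compute the quantities of Theorem~\ref{tr1}. Every diagonal entry of $\mathcal{A}$ is $0$, so $\text{trace }\mathcal{A}=0$ and hence $\text{trace }B=\text{trace }\mathcal{A}-1-(-1)=0$, giving $m=0$. As in the proof of Theorem~\ref{nadj_th1}, $\text{trace }\mathcal{A}^2=\sum_i\sum_{j\sim i}\frac{1}{d_id_j}=2R_{-1}(G)$, so $\text{trace }B^2=\text{trace }\mathcal{A}^2-1^2-(-1)^2=2\bigl(R_{-1}(G)-1\bigr)$ and therefore $s^2=\frac{\text{trace }B^2}{n-2}-m^2=\frac{2(R_{-1}(G)-1)}{n-2}$. (This also shows $R_{-1}(G)\ge1$, so the square roots below are meaningful.)

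Finally, applying Theorem~\ref{tr1} to the order-$(n-2)$ matrix $B$, the bounds on its largest eigenvalue read
$$m+\frac{s}{\sqrt{(n-2)-1}}\le\mu_1\le m+s\sqrt{(n-2)-1},$$
and substituting $m=0$, $s=\sqrt{2(R_{-1}(G)-1)/(n-2)}$ and $\mu_1=\lambda_2$ yields exactly
$$\sqrt{\frac{2(R_{-1}(G)-1)}{(n-2)(n-3)}}\le\lambda_2\le\sqrt{\frac{2(n-3)(R_{-1}(G)-1)}{n-2}},$$
as claimed (for $n\ge4$, so that $B$ has order at least $2$ and $n-3\ge1$). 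The only genuinely delicate point is the preliminary step — verifying that for a connected bipartite graph $\lambda_1=1$, $\lambda_n=-1$, and the rest of the spectrum is symmetric about $0$, so that deleting $\pm1$ leaves a matrix whose top eigenvalue is still $\lambda_2$; after that, everything is a routine trace computation. As for Theorem~\ref{Bip1}, an equality analysis via the equality case of Theorem~\ref{tr1} together with the spectral symmetry would then identify $G=K_{p,q}$ as the extremal graph.
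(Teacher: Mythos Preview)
Your proposal is correct and follows essentially the same route as the paper: delete the eigenvalues $\pm 1$ from the spectrum of $\mathcal{A}$, form an order-$(n-2)$ matrix $B$ with the remaining eigenvalues, compute $m=0$ and $s^2=2(R_{-1}(G)-1)/(n-2)$ via trace identities, and apply Theorem~\ref{tr1}. You are in fact more careful than the paper about justifying the spectral symmetry, the simplicity of $\pm 1$, and the implicit hypothesis $n\ge 4$.
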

\begin{proof}
Since $G$ is bipartite, we have $\lambda_n=-1.$
Let $B$ be a square matrix of order $n-2$ with eigenvalues $\mu_{n-2}\leq\mu_{n-3}\leq\ldots\leq\mu_1$ such that $\mu_i\in\sigma(\mathcal{A})\setminus\{\pm1\}$. Then $\mu_1=\lambda_2=-\lambda_{n-1}=-\mu_{n-2}$. By Theorem \ref{tr1}, we have
$$m=\frac{\text{trace }B}{n-2}=\frac{\text{trace }\mathcal{A}}{n-2}=0,$$
and
$$s^2=\frac{\text{trace }B^2}{n-2}-m^2=\frac{\text{trace }\mathcal{A}^2-(\lambda_1^2+\lambda_n^2)}{n-2}=\frac{2R_{-1}(G)-2}{n-2}.$$
Therefore,
$$\sqrt{\frac{(2R_{-1}(G)-2)}{(n-2)(n-3)}}\leq\mu_1\leq\sqrt{\frac{(n-3)(2R_{-1}(G)-2)}{n-2}}.$$
Hence, we get
$$\sqrt{\frac{2(R_{-1}(G)-1)}{(n-2)(n-3)}}\leq\lambda_2\leq\sqrt{\frac{2(n-3)(R_{-1}(G)-1)}{(n-2)}}.$$
\end{proof}

\begin{remark}
By a similar reason as in Remark \ref{adj_rem2}, we conclude that, the equality on both side of Theorem \ref{nadj_th2} holds if and only if the graph is a complete bipartite graph.
\end{remark}

\begin{theorem}
    \label{nadj_th3}
        Let $G$ be a simple connected graph of order n with dominating vertex. If $d_i=n-1$, for some $i\in G$, then
        $$-2-\frac{2}{n-1}+\min_{k\neq i}\big\{\frac{1}{d_k}+\frac{2d_k}{n-1}\big\}\leq \lambda_n\leq\lambda_2\leq 2-\min_{k\neq i}\big\{\frac{1}{d_k}+\frac{2d_k}{n-1}\big\}.$$
    \end{theorem}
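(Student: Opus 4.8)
The plan is to work with the normalized adjacency matrix $\mathcal{A}=D^{-1}A$, which is an irreducible (because $G$ is connected) row-stochastic matrix, hence has constant row sum $\gamma=1$ and, by Perron--Frobenius, $\lambda_1=1$ as a simple eigenvalue; in particular $\lambda_2$ and $\lambda_n$ are eigenvalues of $\mathcal{A}$ different from $1$. By Theorem \ref{gers} applied with $\gamma=1$ (whose underlying deflation is Theorem \ref{sub1}), every eigenvalue of $\mathcal{A}$ other than $1$ lies in $G_{\mathcal{A}(i)}\cup\{1\}$ for the chosen dominating vertex $i$, hence in $G_{\mathcal{A}(i)}=\bigcup_{k\neq i}G_{\mathcal{A}(i)}(k)$, where $G_{\mathcal{A}(i)}(k)=\{z:|z-\mathcal{A}_{kk}+\mathcal{A}_{ik}|\le\sum_{j\neq i,k}|\mathcal{A}_{kj}-\mathcal{A}_{ij}|\}$. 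So it suffices to bound the real points of each disk $G_{\mathcal{A}(i)}(k)$, $k\neq i$.

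Next I would exploit that $i$ is dominating: then $\mathcal{A}_{ij}=\tfrac{1}{n-1}$ for every $j\neq i$, while $\mathcal{A}_{kk}=0$. Hence the $k$-th disk is centered at $\mathcal{A}_{kk}-\mathcal{A}_{ik}=-\tfrac{1}{n-1}$, and its radius is $r_k=\sum_{j\neq i,k}\bigl|\mathcal{A}_{kj}-\tfrac{1}{n-1}\bigr|$, a sum of $n-2$ terms. The key combinatorial step is to split these indices $j$ according to adjacency with $k$: since the dominating vertex $i$ is itself one of the $d_k$ neighbours of $k$ and is excluded from the sum, exactly $d_k-1$ of the summands come from neighbours of $k$ and contribute $\tfrac{1}{d_k}-\tfrac{1}{n-1}$ each (using $d_k\le n-1$), while the remaining $n-1-d_k$ come from non-neighbours and contribute $\tfrac{1}{n-1}$ each. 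Adding these up and simplifying gives $r_k=2+\tfrac{1}{n-1}-\bigl(\tfrac{1}{d_k}+\tfrac{2d_k}{n-1}\bigr)$.

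Finally, because all eigenvalues of $\mathcal{A}$ are real, $\lambda\in G_{\mathcal{A}(i)}(k)$ forces $-\tfrac{1}{n-1}-r_k\le\lambda\le-\tfrac{1}{n-1}+r_k$; substituting the value of $r_k$ and simplifying the two endpoints yields $-2-\tfrac{2}{n-1}+\bigl(\tfrac{1}{d_k}+\tfrac{2d_k}{n-1}\bigr)\le\lambda\le 2-\bigl(\tfrac{1}{d_k}+\tfrac{2d_k}{n-1}\bigr)$ for whichever $k$ is such that $\lambda$ lies in the $k$-th disk. Since $\tfrac{1}{d_k}+\tfrac{2d_k}{n-1}\ge\min_{k\neq i}\bigl\{\tfrac{1}{d_k}+\tfrac{2d_k}{n-1}\bigr\}$, replacing this quantity by the minimum only weakens the lower bound on the left and the upper bound on the right, so the displayed inequality holds for every eigenvalue $\lambda\neq1$ of $\mathcal{A}$, in particular for $\lambda_2$ and $\lambda_n$; this is exactly the assertion.

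The endpoint simplifications are routine; the one place that needs care is the radius computation, and specifically the observation that only $d_k-1$ (not $d_k$) neighbours of $k$ survive in the deflated row sum because the dominating vertex $i$ has been removed — getting that count right is what produces the coefficient $\tfrac{2d_k}{n-1}$. A parallel argument using Theorem \ref{baur1main} (Brauer--Cassini ovals) in place of the Ger{\v s}gorin discs would give a companion bound, but it is not needed for this statement.
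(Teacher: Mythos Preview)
Your proof is correct and follows essentially the same approach as the paper: apply the deflated Ger{\v s}gorin region $G_{\mathcal{A}(i)}$ at the dominating vertex $i$, compute the center $-\tfrac{1}{n-1}$ and radius $r_k=2-\tfrac{1}{d_k}-\tfrac{2d_k-1}{n-1}$ of each disk by splitting the row into the $d_k-1$ neighbours and $n-1-d_k$ non-neighbours of $k$, and then read off the real endpoints. Your expression $r_k=2+\tfrac{1}{n-1}-\bigl(\tfrac{1}{d_k}+\tfrac{2d_k}{n-1}\bigr)$ is just an algebraic rearrangement of the paper's, and your care about the neighbour count (noting $i$ is excluded) is exactly the point the computation hinges on.
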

\begin{proof}
Since the vertex $i$ is a dominating vertex, therefore $a_{ij}=\frac{1}{n-1}$ for all $j\neq i$, we have
\begin{eqnarray*}
    r_k(\mathcal{A}(i))&=&\sum_{j\in I\setminus\{i,k\}}|\mathcal{A}(i)_{kj}|\\
    &=&\sum_{j\in I\setminus\{i,k\}}|a_{jk}-\frac{1}{n-1}|\\
    &=&\sum_{\substack{j\sim k,\\j\neq i}}|a_{jk}-\frac{1}{n-1}|+\sum_{\substack{j\nsim k,\\j\neq i}}|a_{jk}-\frac{1}{n-1}|\\
    &=&(\frac{1}{d_k}-\frac{1}{n-1})(d_k-1)+\frac{1}{n-1}(n-d_k-1)\\
    &=&2-\frac{1}{d_k}-\frac{2d_k-1}{n-1}.
\end{eqnarray*}
Let $\lambda$ be any eigenvalue of $\mathcal{A}(i)$, then, by Ger{\v s}gorin circle theorem, there exists $k\neq i$ such that $\lambda$ satisfies
$$|\lambda+\frac{1}{n-1}|\leq 2-\frac{1}{d_k}-\frac{2d_k-1}{n-1}.$$
Since, the eigenvalues of $\mathcal{A}$ are real and each eigenvalue of $\mathcal{A}$ other than 1 is also an eigenvalue of $\mathcal{A}(i)$. Thus the required inequality follows by considering all possibilities in the above inequality.
\end{proof}
\begin{remark}
The bounds in Theorem \ref{nadj_th3} are sharp for complete graph.
\end{remark}

\begin{theorem}
    \label{nadj_th4}
        Let $G$ be a simple connected graph with a dominating vertex $i$. Then
        $$\lambda_n\geq-\frac{1}{n-1}-\max_{\substack{j,k\in I\setminus \{i\}\\j\neq k}}\sqrt{\Big(2-\frac{1}{d_j}-\frac{2d_j-1}{n-1}\Big)\Big(2-\frac{1}{d_k}-\frac{2d_k-1}{n-1}\Big)} ,$$
        and
        $$\lambda_2\leq-\frac{1}{n-1}+\max_{\substack{j,k\in I\setminus \{i\}\\j\neq k}}\sqrt{\Big(2-\frac{1}{d_j}-\frac{2d_j-1}{n-1}\Big)\Big(2-\frac{1}{d_k}-\frac{2d_k-1}{n-1}\Big)}.$$
    \end{theorem}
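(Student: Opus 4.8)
The plan is to mirror the proof of Theorem \ref{nadj_th3}, but to replace the Ger\v{s}gorin disks by the Brauer--Cassini ovals furnished by Theorem \ref{baur1main}. Since $i$ is a dominating vertex, $a_{ij}=\frac{1}{n-1}$ for every $j\neq i$, so by Theorem \ref{sub2} the matrix $\mathcal{A}(i)=\mathcal{A}(i|i)-j_{n-1}a(i)^T$ has diagonal entries
$$\mathcal{A}(i)_{jj}=a_{jj}-a_{ij}=-\frac{1}{n-1},\qquad j\in I\setminus\{i\},$$
all equal (here we use that $G$ is simple, so $a_{jj}=0$). The corresponding $j$-deleted absolute row sums were already computed in the proof of Theorem \ref{nadj_th3}:
$$r_j(\mathcal{A}(i))=2-\frac{1}{d_j}-\frac{2d_j-1}{n-1},\qquad j\in I\setminus\{i\}.$$

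Next I would invoke Theorem \ref{baur1main}: every eigenvalue of $\mathcal{A}$ lies in $K_{\mathcal{A}(i)}\cup\{1\}$, and because all the relevant diagonal entries equal $-\frac{1}{n-1}$, the $(j,k)$-Brauer oval collapses to the disk
$$\Big\{z\in\mathbb{C}:\big|z+\tfrac{1}{n-1}\big|^2\le r_j(\mathcal{A}(i))\,r_k(\mathcal{A}(i))\Big\}.$$
Hence any eigenvalue $\lambda\neq 1$ of $\mathcal{A}$ satisfies $\big|\lambda+\tfrac{1}{n-1}\big|\le\sqrt{r_j(\mathcal{A}(i))\,r_k(\mathcal{A}(i))}$ for some pair $j\neq k$ in $I\setminus\{i\}$, and therefore, since the eigenvalues are real,
$$-\tfrac{1}{n-1}-\max_{\substack{j,k\in I\setminus\{i\}\\ j\neq k}}\sqrt{r_j(\mathcal{A}(i))\,r_k(\mathcal{A}(i))}\;\le\;\lambda\;\le\;-\tfrac{1}{n-1}+\max_{\substack{j,k\in I\setminus\{i\}\\ j\neq k}}\sqrt{r_j(\mathcal{A}(i))\,r_k(\mathcal{A}(i))}.$$

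Finally I would note that, since $G$ is connected, $\mathcal{A}$ is an irreducible row-stochastic matrix, so $1$ is a simple eigenvalue with $\lambda_2<1$ and $\lambda_n\neq 1$; thus both $\lambda_2$ and $\lambda_n$ are eigenvalues of $\mathcal{A}(i)$ and obey the displayed two-sided bound. Substituting the formula for $r_j(\mathcal{A}(i))$ into the maximum then gives precisely the asserted inequalities for $\lambda_n$ and $\lambda_2$. I do not expect a genuine obstacle here; the only points needing care are verifying that $\mathcal{A}(i)$ has constant diagonal $-\frac{1}{n-1}$ (this is what makes every Brauer oval a disk centered at $-\frac{1}{n-1}$, and hence what yields the clean symmetric bound) and checking that the excluded eigenvalue $1$ is irrelevant to $\lambda_2$ and $\lambda_n$.
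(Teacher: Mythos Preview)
Your proposal is correct and follows essentially the same route as the paper: apply Brauer's ovals (via Theorem~\ref{baur1main}, equivalently Theorem~\ref{Bau} applied to $\mathcal{A}(i)$), use the row sums $r_k(\mathcal{A}(i))=2-\tfrac{1}{d_k}-\tfrac{2d_k-1}{n-1}$ from the proof of Theorem~\ref{nadj_th3}, observe that the constant diagonal $-\tfrac{1}{n-1}$ turns each oval into a disk centered at $-\tfrac{1}{n-1}$, and then use reality of the spectrum. Your write-up is in fact a bit more careful than the paper's in making explicit why all ovals share the same center and why $\lambda_2,\lambda_n\neq 1$.
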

\begin{proof}
We use Theorem \ref{Bau} to $\mathcal{A}(i)$ to establish above inequalities. Since $a_{ij}=\frac{1}{n-1}$ for all $j\neq i$, we have
\begin{eqnarray*}
r_k(\mathcal{A}(i))=2-\frac{1}{d_k}-\frac{2d_k-1}{n-1}
\end{eqnarray*}
Let $\lambda$ be an eigenvalue of $\mathcal{A}$ other than 1. Then $\lambda$ must satisfy
$$|\lambda-\frac{1}{n-1}|^2\leq\Big(2-\frac{1}{d_j}-\frac{2d_j-1}{n-1}\Big)\Big(2-\frac{1}{d_k}-\frac{2d_k-1}{n-1}\Big)$$
for some $j,k\in I\setminus\{i\},\ j\neq k.$\\
Now since all the eigenvalues of $\mathcal{A}$ are real, the result follows from the above inequality.
\end{proof}

\begin{remark}
Equality on both side of Theorem \ref{nadj_th4} holds if the graph is a complete graph.
\end{remark}

\section{Eigenvalue bounds for Laplacian matrix}\label{Sec5}
In this section, we establish bounds for eigenvalues of Laplacian matrices of connected graphs. First, we observe the following result about the nonzero eigenvalues of the Laplacian matrix of connected graphs.
\begin{theorem}
    \label{La1}
Let $G$ be a connected graph with $n$ vertices. Then any non-zero eigenvalue of $L=[l_{ij}]$ is also an eigenvalue of the matrix
$$L(k)=L(k|k)-j_{n-1}l(k),\ \ k=1,2,\ldots, n,$$
where $l(k)^T=\left[\begin{array}{cccccc}
l_{k1}&\cdots&l_{k,k-1}&l_{k,k+1}&\cdots&l_{kn}
\end{array}\right].$
\end{theorem}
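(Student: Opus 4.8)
The plan is to deduce this immediately from Theorem~\ref{sub2}, once we check that the Laplacian has constant row sum. First I would observe that $L = D - A$, and since the $k$-th row of $A$ sums to $d_k$ while $l_{kk} = d_k$, the $k$-th row of $L$ sums to $d_k - d_k = 0$. Thus $L$ is a (real) square matrix with constant row sum $\gamma = 0$, so Theorem~\ref{sub2} applies with $A$ replaced by $L$: for each $k \in \{1,\dots,n\}$, the matrix $L$ is similar to the block upper triangular matrix
$$\left[\begin{array}{cc} 0 & l(k)^T \\ 0 & L(k) \end{array}\right], \qquad L(k) = L(k|k) - j_{n-1} l(k),$$
where $l(k)^T$ is the $k$-deleted row of $L$ as in the statement.

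Next I would read off the spectral consequence. Similar matrices share a characteristic polynomial, and the characteristic polynomial of a block upper triangular matrix is the product of those of its diagonal blocks, so $\det(xI_n - L) = x\,\det(xI_{n-1} - L(k))$; equivalently, as multisets $\sigma(L) = \{0\} \cup \sigma(L(k))$. Hence every root $\lambda \neq 0$ of $\det(xI_n - L)$ --- i.e.\ every non-zero eigenvalue of $L$ --- is a root of $\det(xI_{n-1} - L(k))$, i.e.\ an eigenvalue of $L(k)$, which is exactly the claim.

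There is essentially no obstacle here; the result is a direct specialization of Theorem~\ref{sub2} to the case $\gamma = 0$. The only point worth noting is that connectedness of $G$ is not actually needed for the conclusion as stated: it merely ensures that $0$ is a simple eigenvalue of $L$, so that precisely one copy of $0$ is split off, but even without it the multiset identity above already shows that any non-zero eigenvalue of $L$ is inherited by $L(k)$. Connectedness is presumably assumed here because it is the hypothesis required in the subsequent bounds for the algebraic connectivity.
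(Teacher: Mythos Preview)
Your proof is correct and follows essentially the same route as the paper's own proof, which simply notes that connectedness makes $0$ a simple eigenvalue of $L$ and then invokes Theorem~\ref{sub2}. Your version is just more explicit in verifying the constant row sum and reading off the spectral consequence, and your observation that connectedness is not strictly required for the stated conclusion is a valid extra remark.
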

\begin{proof}
Since $G$ is connected, so $0$ is a simple eigenvalue of $L$. Therefore the result follows from Theorem \ref{sub2}.
\end{proof}
Using the above result and Theorem \ref{tr1}, let us derive bounds for the largest and the second smallest eigenvalues of Laplacian matrix of  a connected graph.
\begin{theorem}
    \label{lap_th1}
    Let $G$ be a connected graph on $n$ vertices. Then the second smallest eigenvalue $\lambda_{n-1}$ and the largest eigenvalue $\lambda_1$ of the Laplacian matrix satisfy
    $$\frac{n}{n-1}\Delta-\sqrt{\frac{n-2}{n-1}\Bigg[\sum d_i^2+n\Delta-\frac{n^2\Delta^2}{n-1}\Bigg]}\leq\lambda_{n-1}\leq \frac{n}{n-1}\Delta-\sqrt{\frac{\sum d_i^2+n\Delta-\frac{n^2\Delta^2}{n-1}}{(n-1)(n-2)}}$$
    and
    $$\frac{n}{n-1}\Delta+\sqrt{\frac{\sum d_i^2+n\Delta-\frac{n^2\Delta^2}{n-1}}{(n-1)(n-2)}}\leq\lambda_1\leq\frac{n}{n-1}\Delta+\sqrt{\frac{n-2}{n-1}\Bigg[\sum d_i^2+n\Delta-\frac{n^2\Delta^2}{n-1}\Bigg]}.$$
\end{theorem}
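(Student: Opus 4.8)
The plan is to mimic the strategy used for Theorem \ref{adj_th1} and Theorem \ref{nadj_th1}: pass from $L$ to the smaller matrix $L(k)$ via Theorem \ref{La1}, compute its first two moments, and feed them into Theorem \ref{tr1}. Since $G$ is connected, $0$ is a simple eigenvalue of $L$, so Theorem \ref{La1} tells us that for every $k$ the matrix $L(k)$ has order $n-1$ and its spectrum is exactly the list of nonzero Laplacian eigenvalues $\lambda_1\geq\lambda_2\geq\cdots\geq\lambda_{n-1}>0$. In particular the largest eigenvalue of $L(k)$ is $\lambda_1$ and the smallest is $\lambda_{n-1}$, so any inequality produced by Theorem \ref{tr1} for the extreme eigenvalues of the order-$(n-1)$ matrix $L(k)$ is an inequality for $\lambda_1$ and $\lambda_{n-1}$.

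Next I would compute $m$ and $s^2$ for $B=L(k)$. Because deleting a zero eigenvalue changes neither trace, we have $\operatorname{trace} L(k)=\operatorname{trace} L=\sum_i d_i=n\Delta$ and $\operatorname{trace} L(k)^2=\operatorname{trace} L^2$. For the latter, $\operatorname{trace} L^2=\sum_i l_{ii}^2+\sum_{i\neq j} l_{ij}^2=\sum_i d_i^2+\sum_{i\neq j} a_{ij}^2=\sum_i d_i^2+2|E|=\sum_i d_i^2+n\Delta$, using $\sum_{i\neq j}a_{ij}=2|E|=\sum_i d_i=n\Delta$. Applying the definitions in Theorem \ref{tr1} to the matrix $L(k)$ of order $n-1$ gives $m=\dfrac{n\Delta}{n-1}$ and $s^2=\dfrac{\sum_i d_i^2+n\Delta}{n-1}-\dfrac{n^2\Delta^2}{(n-1)^2}=\dfrac{1}{n-1}\Bigl[\sum_i d_i^2+n\Delta-\dfrac{n^2\Delta^2}{n-1}\Bigr]$.

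Finally I would invoke Theorem \ref{tr1}, being careful that the ambient dimension there is now $n-1$, so the factor $(n-1)^{1/2}$ in that theorem becomes $(n-2)^{1/2}$ (this requires $n\geq 3$, which is part of the hypothesis for the bound to make sense). The inequalities $m-s(n-2)^{1/2}\leq\lambda_{n-1}\leq m-s/(n-2)^{1/2}$ and $m+s/(n-2)^{1/2}\leq\lambda_1\leq m+s(n-2)^{1/2}$, after substituting the values of $m$ and $s$ above and pulling $(n-2)^{1/2}$ under the radical sign, yield exactly the four displayed estimates. There is no genuine obstacle here beyond routine algebraic simplification; the only points that need care are the index bookkeeping (that the deleted eigenvalue is the zero one, so traces are unchanged, and that the matrix $L(k)$ has order $n-1$ rather than $n$) and the nonnegativity of $s^2$, which is automatic since $s^2$ is a variance of real numbers.
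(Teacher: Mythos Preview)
Your proposal is correct and follows exactly the paper's approach: pass to $L(k)$ via Theorem \ref{La1}, note that the deleted eigenvalue is $0$ so the first two traces are unchanged, compute $m$ and $s^2$ for the order-$(n-1)$ matrix, and apply Theorem \ref{tr1} with $n$ replaced by $n-1$. Your derivation of $\operatorname{trace}L^2=\sum_i d_i^2+n\Delta$ is in fact cleaner than the paper's intermediate line, which contains a typo.
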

\begin{proof}
Since any nonzero eigenvalue of $L$ is also an eigenvalue of $L(k)$. Therefore
$$\text{trace } L(k)=\text{trace } L\ \text{ and }\ \text{trace } L(k)^2=\text{trace } L^2,$$
for all $k=1,2,\ldots,n.$\\
Thus, for any $k$, we have
$$m=\frac{\text{trace } L(k)}{n-1}=\frac{\sum d_i}{n-1}=\frac{n}{n-1}\Delta$$
and
$$s^2=\frac{\text{trace } A(k)^2}{n-1}-m^2=\frac{\sum d_i^2-\sum d_i}{n-1}-\Delta^2=\frac{\sum d_i}{n-1}-\frac{n}{n-1}\Delta-\dfrac{n^2}{(n-1)^2}\Delta^2. $$
The required result can be obtained by using these in Theorem \ref{tr1}.
\end{proof}
\begin{remark}
If equality holds in Theorem \ref{lap_th1}, then the graph $G$ must have at most three Laplacian eigenvalues with multiplicity of one eigenvalue at least $n-2$. Thus, using Theorem \ref{tr1} and Theorem \ref{lap_th2}, we conclude that:
  \begin{itemize}
    \item[(i)] equality holds in the right hand side of both inequalities in Theorem \ref{nadj_th1} if and only if the graph is a complete graph or a star graph or a regular complete bipartite graph, and
    \item[(ii)] equality holds in the left hand side of both inequalities in Theorem \ref{nadj_th1} holds if and only if the graph is a complete graph or a graph $K_n-e$, graph obtained by deleting an edge $e$ from the complete graph.
 \end{itemize}
\end{remark}
Next, we derive  bounds for the eigenvalues of connected graphs  in terms of the number of common neighbors of its vertices using Theorem \ref{La1}.
\begin{theorem}
\label{la_th2}
Let $G$ be a connected graph on $n$ vertices. Then the largest eigenvalue $\lambda_1$ and the second smallest eigenvalue $\lambda_{n-1}$ of the Laplacian matrix satisfy
$$\max_{i\in G}\min_{k\neq i}\alpha_{ik}\leq \lambda_{n-1}\leq\lambda_1\leq\min_{i\in G}\max_{k\neq i}\beta_{ik},$$
where, for $k\neq i$, $\alpha_{ik}$ and $\beta_{ik}$ are given by
$$\alpha_{ik}=\begin{cases}
-d_i+2N(i,k)+1,&\textit{ if }k\sim i,\\
-d_i+2N(i,k),&\textit{ if }k\nsim i,
\end{cases}$$
and
$$\beta_{ik}=\begin{cases}
2d_i+d_k-2N(i,k)-1,&\textit{ if }k\sim i,\\
2d_i+d_k-2N(i,k),&\textit{ if }k\nsim i.
\end{cases}$$
\end{theorem}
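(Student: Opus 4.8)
The plan is to follow the pattern of the proof of Theorem \ref{adj_th2}, now for the Laplacian: first pass from $L$ to the reduced matrices $L(i)$ via Theorem \ref{La1}, and then localize the spectrum of each $L(i)$ using the Ger{\v s}gorin part of Theorem \ref{gers}. Since $G$ is connected, $0$ is a simple eigenvalue of $L$, so $\lambda_1$ and $\lambda_{n-1}$ are both nonzero; hence, by Theorem \ref{La1}, each of them is an eigenvalue of $L(i)=L(i|i)-j_{n-1}l(i)$ for every $i=1,2,\dots,n$.

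The key computation would be the Ger{\v s}gorin data of $L(i)$. Writing $L=D-A$, for $j,k\in I\setminus\{i\}$ with $j\neq k$ the entries of $L(i)$ are $L(i)_{kk}=l_{kk}-l_{ik}=d_k+a_{ik}$ on the diagonal and $L(i)_{kj}=l_{kj}-l_{ij}=a_{ij}-a_{kj}$ off it, so the $k$-deleted absolute row sum of $L(i)$ is
$$r_k(L(i))=\sum_{j\neq i,k}|a_{ij}-a_{kj}|,$$
which counts the vertices $j\notin\{i,k\}$ adjacent to exactly one of $i$ and $k$. Splitting this count over the four adjacency patterns of such a $j$ with respect to $i$ and $k$ — exactly the calculation performed in Cases I and II of the proof of Theorem \ref{adj_th2}, but now with $d_i$ and $d_k$ possibly distinct — I would obtain $r_k(L(i))=d_i+d_k-2N(i,k)-2$ when $k\sim i$ and $r_k(L(i))=d_i+d_k-2N(i,k)$ when $k\nsim i$; the extra $-2$ in the first case reflects that $k$ is then a neighbour of $i$ (and $i$ a neighbour of $k$) which must be dropped from the count, while $N(i,k)$ by definition never counts $i$ or $k$. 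Combined with the centres $L(i)_{kk}=d_k+a_{ik}$, the $k$-th Ger{\v s}gorin disk of $L(i)$ meets the real line in the interval $[\,L(i)_{kk}-r_k(L(i)),\ L(i)_{kk}+r_k(L(i))\,]$, whose left and right endpoints are the quantities $\alpha_{ik}$ and $\beta_{ik}$ of the statement.

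To finish, I would use that $L$ has real spectrum, so Theorem \ref{gers} (with $\gamma=0$, and noting that $0$ is already known to be an eigenvalue) puts every nonzero eigenvalue of $L$ in $\bigcap_{i=1}^n\big(G_{L(i)}\cap\mathbb{R}\big)=\bigcap_{i=1}^n\bigcup_{k\neq i}[\alpha_{ik},\beta_{ik}]$. Hence, for each fixed $i$, $\lambda_{n-1}\geq\min_{k\neq i}\alpha_{ik}$ and $\lambda_1\leq\max_{k\neq i}\beta_{ik}$; taking the best $i$ in each gives $\max_i\min_{k\neq i}\alpha_{ik}\leq\lambda_{n-1}\leq\lambda_1\leq\min_i\max_{k\neq i}\beta_{ik}$, the middle inequality being immediate. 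I expect the only genuinely non-mechanical step to be the case analysis for $r_k(L(i))$: everything comes down to counting common neighbours correctly and to keeping track of the exclusions of $i$ and $k$ from the sums, which is exactly what produces the $\pm1$'s and $\pm2$'s in the definitions of $\alpha_{ik}$ and $\beta_{ik}$. The remainder is a direct application of Theorems \ref{La1} and \ref{gers}.
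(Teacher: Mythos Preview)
Your proposal is correct and follows essentially the same route as the paper: reduce to $L(i)$ via Theorem~\ref{La1}, compute the Ger{\v s}gorin centres $d_k+a_{ik}$ and radii $r_k(L(i))$ by the same two-case neighbour count, and then optimize over $k$ and $i$. One small slip (which the paper's own proof also makes): in the case $k\sim i$ the left endpoint $L(i)_{kk}-r_k(L(i))$ actually equals $-d_i+2N(i,k)+3$, not $\alpha_{ik}=-d_i+2N(i,k)+1$; since this only sharpens the lower bound, the stated inequality $\lambda_{n-1}\ge\max_i\min_{k\neq i}\alpha_{ik}$ still follows.
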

\begin{proof} By Theorem \ref{La1}, each nonzero eigenvalue of $L$ is also an eigenvalue of $L(i)$, for $i=1,2,\ldots,n.$ So, by Theorem \ref{gers}, $\lambda$ lies in the regions $G_{L(i)}$ with $$G_{L(i)}=\bigcup_{k\neq i}\Big{\{}z\in\mathbb{C}:|z-l_{kk}+l_{ik}|\leq\sum_{\substack{j\neq k\\j\neq i}}|l_{kj}-l_{ij}|\Big{\}}=\bigcup_{\substack{k=1\\k\neq i}}^nG_{L(i)}(k).$$

For the vertex $k\in G$, $k\neq i$, let us calculate the regions $G_{L(i)}(k)$:\\
    \textbf{Case I:} If $k\sim i$, then we have
\begin{eqnarray*}
    |z-d_k-1|&\leq&\sum_{j\neq i,k}|l_{kj}-l_{ij}|\\
    &=&\sum_{\substack{j\sim i,\\j\sim k}}|l_{kj}-l_{ij}|+\sum_{\substack{j\nsim i,\\j\sim k}}|l_{kj}-l_{ij}|+ \sum_{\substack{j\sim i,\\j\nsim k}}|l_{kj}-l_{ij}|+\sum_{\substack{j\nsim i,\\j\nsim k}}|l_{kj}-l_{ij}|\\
    &=&0+d_k-N(i,k)-1+d_i-N(i,k)-1+0\\
    &=&d_i+d_k-2N(i,k)-2.
\end{eqnarray*}
\textbf{Case II: }If $k\nsim i$, then $l_{ik}=0$ and $l_{ki}=0$. Thus, we have
\begin{eqnarray*}
    |z-d_k|&\leq&\sum_{j\neq i,k}|l_{kj}-l_{ij}|\\
    &=&\sum_{\substack{j\sim i,\\j\sim k}}|l_{kj}-l_{ij}|+\sum_{\substack{j\nsim i,\\j\sim k}}|l_{kj}-l_{ij}|+ \sum_{\substack{j\sim i,\\j\nsim k}}|l_{kj}-l_{ij}|+\sum_{\substack{j\nsim i,\\j\nsim k}}|l_{kj}-l_{ij}|\\
    &=&0+d_k-N(i,k)+d_i-N(i,k)+0\\
    &=&d_i+d_k-2N(i,k).
\end{eqnarray*}
Now, since the eigenvalues of $L$ are real, from Case I and Case II we have
$$-d_i+2N(i,k)+1\leq\lambda\leq d_i+2d_k-2N(i,k)-1,\text{ if }i\sim k,$$
and
$$-d_i+2N(i,k)\leq\lambda\leq d_i+2d_k-2N(i,k),\text{ if }i\nsim k.$$
Now,  by considering all possible discs, we get
$$\min_{k\neq i}\alpha_{ik}\leq\lambda\leq\max_{k\neq i}\beta_{ik}.$$
Since the above inequality is true for all $i=1,2,\ldots n$, therefore
$$\max_{i\in G}\min_{k\neq i}\alpha_{ik}\leq\lambda\leq\min_{i\in G}\max_{k\neq i}\beta_{ik}.$$
\end{proof}
\begin{remark}
The bounds in Theorem \ref{lap_th2} are sharp for complete graphs.
\end{remark}
Next, we derive  bounds for the eigenvalues of connected graphs  in terms of the degrees of its vertices.
\begin{theorem}
\label{}
Let $G$ be a simple connected graph of order n with dominating vertex. If $d_i=n-1$, for some $i\in G$, then  $$\lambda_{n-1}\geq \frac{1}{2}\max_{\substack{j,k\in I\setminus \{i\}\\j\neq k}}\big\{d_j+d_k+2-\sqrt{(d_j-d_k)^2+4(n-d_j)(n-d_k)}\big\},$$
and
$$\lambda_1\leq \frac{1}{2}\max_{\substack{j,k\in I\setminus \{i\}\\j\neq k}}\big\{d_j+d_k+2+\sqrt{(d_j-d_k)^2+4(n-d_j)(n-d_k)}\big\}.$$
\end{theorem}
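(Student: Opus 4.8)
The plan is to mimic the proof of Theorem \ref{nadj_th4}: apply the Brauer--Cassini oval theorem (Theorem \ref{Bau}) to the reduced matrix $L(i)$ attached to the dominating vertex $i$. By Theorem \ref{La1}, every nonzero eigenvalue of $L$ is an eigenvalue of $L(i) = L(i|i) - j_{n-1} l(i)$; since $G$ is connected, $0$ is a simple eigenvalue of $L$, and the block-triangular form of Theorem \ref{sub2} shows that the spectrum of the order-$(n-1)$ matrix $L(i)$ is exactly $\{\lambda_1,\dots,\lambda_{n-1}\}$. Hence $\lambda_1$ and $\lambda_{n-1}$ are the largest and smallest eigenvalues of $L(i)$, and it suffices to localize the real spectrum of $L(i)$.

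First I would record the entries of $L(i)$ from the dominating-vertex hypothesis. Since $i$ is adjacent to every other vertex, $l_{ij} = -1$ for all $j \neq i$, so the subtracted row is constant and one gets $L(i)_{kk} = l_{kk} - l_{ik} = d_k + 1$ and $L(i)_{kj} = l_{kj} - l_{ij} = 1 - a_{kj}$ for $j \neq i,k$. The $k$-deleted absolute row sum is therefore $r_k(L(i)) = \sum_{j \neq i,k}(1 - a_{kj})$, which counts the vertices other than $i,k$ not adjacent to $k$; as $k \sim i$, this equals $n - 1 - d_k$ (the same value as the $k \sim i$ radius in Theorem \ref{la_th2} after substituting $N(i,k) = d_k - 1$). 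For each pair $j \neq k$ in $I \setminus \{i\}$, solving the real quadratic $(z - d_j - 1)(z - d_k - 1) = r_j(L(i))\, r_k(L(i))$ shows that the Brauer oval $K_{jk}(L(i))$ meets the real axis in the interval with endpoints $\tfrac{1}{2}\big(d_j + d_k + 2 \pm \sqrt{(d_j - d_k)^2 + 4\, r_j(L(i))\, r_k(L(i))}\big)$; the discriminant simplifies because $(d_j+d_k+2)^2 - 4(d_j+1)(d_k+1) = (d_j-d_k)^2$. This is the closed form in the statement, the radii supplying the term under the root.

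Since the eigenvalues of $L(i)$ are real and lie in $\bigcup_{j\neq k} K_{jk}(L(i))$, the largest one is at most the largest right endpoint over all pairs, which gives the claimed upper bound for $\lambda_1$. Moreover, enlarging each radius from $n-1-d_k$ to $n-d_k$ only inflates the ovals, so the inclusion --- and hence the bound with $(n-d_j)(n-d_k)$ under the root --- remains valid; this is how I would reconcile the stated radii with the computed ones.

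The hard part will be the lower bound for $\lambda_{n-1}$. The union-of-ovals argument only places $\lambda_{n-1}$ in \emph{some} oval $K_{j_0 k_0}$, giving $\lambda_{n-1} \geq z_-(j_0,k_0) \geq \min_{j,k} z_-(j,k)$ --- a \emph{minimum} of left endpoints. In Theorem \ref{nadj_th4} this step produced a maximum only because all ovals share the common center $-\tfrac{1}{n-1}$, so that the smallest left endpoint equals the center minus the \emph{largest} radius; with the distinct centers $d_k + 1$ that coincidence fails. I would thus need to show that the oval containing $\lambda_{n-1}$ is the one with the \emph{largest} left endpoint, and this is exactly the delicate point: for $K_n - e$ (Theorem \ref{lap_th2}(ii)) the left endpoints genuinely differ across pairs, and it is the minimum --- not the maximum --- over pairs that is guaranteed to lie below $\lambda_{n-1}$. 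Settling this extremization (or replacing the maximum by a minimum) is the crux on which the lower bound rests.
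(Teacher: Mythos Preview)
Your approach is exactly the paper's: apply Theorem~\ref{Bau} to $L(i)$ for the dominating vertex $i$, compute the diagonal entries $d_k+1$ and the deleted row sums, and read off the real interval cut out by each oval $K_{jk}(L(i))$.

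On the row sums: your value $r_k(L(i))=n-1-d_k$ is correct. The paper writes $r_k(L(i))=n-d_k$, an off-by-one slip in counting the non-neighbours of $k$ in $I\setminus\{i,k\}$ (since $i\sim k$, the vertex $i$ is never among them). Your remedy --- enlarging the radius from $n-1-d_k$ to $n-d_k$ only widens the ovals --- is a legitimate way to recover the stated (slightly weaker) bound, and is more careful than what the paper does.

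On the lower bound: the concern you raise is genuine, and the paper does not resolve it. After obtaining, for each nonzero eigenvalue $\lambda$, the existence of \emph{some} pair $(j,k)$ with
\[
\lambda \ \ge\ \tfrac{1}{2}\Big(d_j+d_k+2-\sqrt{(d_j-d_k)^2+4(n-d_j)(n-d_k)}\Big),
\]
the paper simply says ``considering all the possibilities in the above inequality, we get the required result.'' That step delivers only $\lambda_{n-1}\ge \min_{j\neq k} z_-(j,k)$, not the $\max$ printed in the statement; your diagnosis that the common-centre trick from Theorem~\ref{nadj_th4} is unavailable here (the centres $d_k+1$ differ) is exactly right. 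So the gap you flag is shared by the paper's own proof: with $\min$ in place of $\max$ the argument goes through, but the $\max$ as stated is not justified by this method.
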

\begin{proof}
 Since $l_{ij}=-1$ for all $j\neq i$, we have
\begin{eqnarray*}
    r_k(L(i))&=&\sum_{j\neq k}|L(i)_{kj}|\\
    &=&\sum_{\substack{j\nsim k,\\j\sim i}}1\\
    &=&n-d_{k}
\end{eqnarray*}
Let $\lambda$ be any nonzero eigenvalue of $L$. Then $\lambda$ is also an eigenvalue of $L(i)$. By Theorem \ref{Bau}, there exists $j,k\in I\setminus \{i\},\ j\neq k$, such that $\lambda$ lies in the region:
$$|z-d_j-1|.|z-d_k-1|\leq (n-d_j)(n-d_k).$$
Since, eigenvalues of $L$ are real we have
$$|\lambda-\frac{d_j+d_k+2}{2}|^2\leq \frac{1}{4}(d_j-d_k)^2+(n-d_j)(n-d_k)
$$
This gives,
$$\lambda\leq\frac{d_j+d_k+2}{2}+\frac{1}{2}\sqrt{(d_j-d_k)^2+4(n-d_j)(n-d_k)},$$
and
$$\lambda\geq\frac{d_j+d_k+2}{2}-\frac{1}{2}\sqrt{(d_j-d_k)^2+4(n-d_j)(n-d_k)}.$$
Now considering all the possibilities in the above inequality, we get the required result.
\end{proof}
\section*{Acknowledgments}
Ranjit Mehatari is funded by NPDF (File no.- PDF/2017/001312), SERB, India. M. Rajesh Kannan would like to thank Department of Science and Technology, India, for the financial support(Earily Carrier Research Award(ECR/2017/000643)).


\begin{thebibliography}{99}

\bibitem{Ban1}
A. Banerjee, R. Mehatari, \textit{An eigenvalue localization theorem for stochastic matrices and its application to Randi\'c matrices}, Linear Algebra Appl. 505 (2016) 85-96.

\bibitem{Bond}
J. A. Bondy, U. S. R. Murty, \textit{Graph Theory with Applications}, American Elsevier, New York, 1976.
\bibitem{Boll}
B. Bollob\'as, P. Erd\"os, \textit{Graphs of extremal weights}, Ars Combin., 50 (1998) 225-233.
\bibitem{Boz}
\c{S}.B. Bozkurt, A.D. G\"ung\"or, I. Gutman, A.S. \c{C}evik,
\textit{Randi\'c matrix and Randi\'c energy},
MATCH Commun. Math. Comput. Chem., 64 (2010) 239-250.
\bibitem{Brou}
A. E. Brouwer, W. H. Haemers, \textit{Spectra of graphs}, Universitext, Springer,
New York, 2012.
\bibitem{Mcav}
M. Cavers,\textit{ The normalized Laplacian matrix and general Randi\'c index of graphs}, Ph.D. Thesis, University of Regina (2010).
 \bibitem{But}
S. Butler, F. Chung, \textit{Spectral Graph Theory}, in Handbook of Linear Algebra, 2nd Edition, CRC Press (2013).
\bibitem{Chung}
F. Chung, \textit{Spectral Graph Theory}, AMS (1997).
\bibitem{Cve1}
D. Cvetkovi\'c, M. Doob, H. Sachs, \textit{Spectra of Graphs – Theory and Application}, Academic Press, 1980.

\bibitem{Das2}
K. C. Das, \textit{A sharp upper bound for the number of spanning trees of a graph}, Graphs Comb. 23 (2007) 625-632.

\bibitem{Fed}
M. Fiedler, \textit{Algebraic Connectivity of Graphs}. Czechoslovak Math. J., 23 (1973) 298-305.

\bibitem{LiGu}
J. Li, J-M. Guo, Y. C. Shiu, \textit{Bounds on normalized Laplacian eigenvalues of graphs}, J. Inequal. Appl., (2014) 1-8.
\bibitem{Hall}
R. Marsli, F. J. Hall, \textit{On bounding the eigenvalues of matrices with constant row-sums}, Linear Multilinear Algebra, (2018) Doi: 10.1080/03081087.2018.1430736.
\bibitem{Ran}
M. Randi\'c, \textit{On characterization of molecular branching},
J. Am. Chem. Soc.
97 (1975) 6609-6615.
\bibitem{Rojo}
O. Rojo, R. L. Soto, \textit{A New Upper Bound on the Largest Normalized Laplacian Eigenvals}, Oper. Matrices, 7 (2013) 323-332.
\bibitem{Stan}
Z. Stani\'c, \textit{Inequalities for Graph Eigenvalues}, Cambridge University Press, 2015.
\bibitem{Varga}
R. S. Varga, \textit{Ger{\v s}gorin and His Circles}, Springer Series in Computational Mathematics, vol. 36, Springer-Verlag, Berlin, 2004.
\bibitem{Wolk}
H. Wolkowicz, G.P.H. Styan, \textit{Bounds for eigenvalues using traces}, Linear Algebra Appl. 29 (1980) 471-506.
\end{thebibliography}
\end{document}